\newcommand{\adef}{\begin{defin}}
\newcommand{\zdef}{\end{defin}}
\newtheorem{defin}{Definition}
\newtheorem{theorem}{Theorem}[section]
\newtheorem{lemma}[theorem]{Lemma}
\newtheorem{proposition}[theorem]{Proposition}
\newtheorem{corollary}[theorem]{Corollary}
\numberwithin{equation}{section}
\numberwithin{equation}{section}
\newcommand{\vertiii}[1]{{\left\vert\kern-0.25ex\left\vert\kern-0.25ex\left\vert #1
    \right\vert\kern-0.25ex\right\vert\kern-0.25ex\right\vert}}
\def\block(#1,#2)#3{\multicolumn{#2}{c}{\multirow{#1}{*}{$ #3 $}}}
\DeclarePairedDelimiterX{\inp}[2]{\langle}{\rangle}{#1, #2}
\theoremstyle{definition}
\newtheorem{definition}[theorem]{Definition}
\theoremstyle{remark}
\numberwithin{equation}{section}
\address{Departamento de Matem\'atica, Instituto de Ci\^encias Matem\'aticas e de Computa\c{c}\~ao, Universidade de S\~ao Paulo, Avenida Trabalhador S\~ao-carlense, 400 - Centro
CEP: 13566-590 - S\~ao Carlos - SP, Brazil}
\email{willhans@icmc.usp.br}
\begin{document}

\title{Complex interpolation of Orlicz sequence spaces and its higher order Rochberg spaces}

\author{Willian H. G. Corrêa}

\thanks{This work was financed in part by CNPq (National Council for Scientific and Technological Development - Brazil, grant 140413/2016-2); Coordenação de Aperfeiçoamento de Pessoal de Nível Superior – Brasil (CAPES) – PDSE program 88881.134107/2016-0; São Paulo Research Foundation (FAPESP), grants 2016/25574-8, 2018/03765-1, 2019/09205-0. The author was also a member of project MTM2016-76958-C2-1-P of DGICYT (Spain).}

\begin{abstract}
We show that if $(\ell_{\phi_0}, \ell_{\phi_1})$ is a couple of suitable Orlicz sequence spaces then the corresponding Rochberg derived spaces of all orders associated to the complex interpolation method are Fenchel-Orlicz spaces. In particular, the induced twisted sums have the $(C[0, 1], \mathbb{C})$-extension property.
\end{abstract}

\subjclass{46B70, 46M18}

\maketitle

\section{Introduction}

The present paper is a spiritual continuation of \cite{ACK-Fenchel}, where the authors deal with extensions of Orlicz sequence spaces. An extension of a Banach space $Y$ is a quasi-Banach space $X$ such that $Y$ is isomorphic to a subspace of $X$ and the respective quotient is also isomorphic to $Y$. Androulakis, Cazaku and Kalton showed how Fenchel-Orlicz spaces (a natural generalization of Orlicz spaces to higher dimensions) may be used to obtain extensions of Orlicz sequence spaces with nontrivial type. 

More generally, given Banach spaces $Y$ and $Z$, a twisted sum of $Z$ and $Y$ (the order is important) is a quasi-Banach space $X$ such that $Y$ is isomorphic to a subspace of $X$ and $X/Y$ is isomorphic to $Z$. We may represent that in terms of a short exact sequence
\[
\xymatrix{0 \rightarrow Y \rightarrow X \rightarrow Z \rightarrow 0}
\]

The most famous case is when $Y = Z =  \ell_2$. If $X \neq \ell_2$ then $X$ is called a \emph{twisted Hilbert space}. The first example of twisted Hilbert space was given by Enflo, Lindenstrauss and Pisier \cite{Enflo_Lindenstrauss_Pisier_1975}, followed some years later by the example of Kalton and Peck \cite{KaltonPeck}.

One can go the extra mile and twist twisted sums: in \cite{HigherOrderRochberg} Rochberg presented his derived spaces associated to the complex method of interpolation. Such derived spaces give us families of twisted sums. Take, for example, $Z_2^{(1)} = \ell_2$ and $Z_2^{(2)} = Z_2$, the Kalton-Peck space. In \cite{TwistedTwisted}, Cabello Sánchez, Castillo and Kalton used Rochberg's construction to obtain a family of Banach spaces $(Z_2^{(n)})_{n \geq 3}$ that fit nicely into short exact sequences
\[
\xymatrix{0 \rightarrow Z_2^{(m)} \rightarrow Z_2^{(m+n)} \rightarrow Z_2^{(n)} \rightarrow 0}
\]

Using those spaces, they showed that being a twisted Hilbert space is not a 3-space property, since $Z_2^{(4)}$ is a Banach space which contains an isomorphic copy of $Z_2$, the respective quotient is isomorphic to $Z_2$, but $Z_2^{(4)}$ is not a twisted Hilbert space. We call the spaces $Z_2^{(n)}$ \emph{higher order extensions} of $\ell_2$.

Our aim in this paper is to give a description and to study the properties of the higher order extensions of Orlicz sequence spaces obtained by complex interpolation. We prove that those spaces are Fenchel-Orlicz spaces through an adaptation of the arguments of Androulakis, Cazaku and Kalton in \cite{ACK-Fenchel}.

The structure of Fenchel-Orlicz space is behind the astonishing fact that if $T : \ell_2 \rightarrow C[0, 1]$ is any operator then $T$ admits an extension to the Kalton-Peck space $Z_2$. We are able to show that our higher order extensions share the same property (considering complex scalars).

The structure of the paper is as follows: Section \ref{sec:Background} contains background on Fenchel-Orlicz spaces, complex interpolation and the twisted sums it generates. In Sections \ref{sec:quasiYoung} and \ref{sec:derivedareFenchel} we show how to obtain quasi-Young functions from complex interpolation of Orlicz sequence spaces, and that the derived spaces induced by the interpolation process agree with the Fenchel-Orlicz spaces generated by those quasi-Young functions. In Section \ref{sec:extension} we show that our twisted sums satisfy the aforementioned property of extension of operators with image in $C([0,1], \mathbb{C})$, and in Section \ref{sec:finalremarks} we conclude with a remark on how the results in \cite{ACK-Fenchel} may be seen in the context of complex interpolation.

\section{Background}\label{sec:Background}

\subsection{Fenchel-Orlicz spaces}

The standard reference for this topic is the work of Turett \cite{Turett}. Usually Young functions are defined for real vector spaces, but we will need to consider their complex version here. A Young function $\phi : \mathbb{C}^n \rightarrow [0, \infty)$ is a convex function such that $\phi(0) = 0$, $\lim_{t \rightarrow \infty} \phi(tx) = \infty$ for every $x \in \mathbb{C}^n \setminus \{0\}$, and $\phi(e^{is} x) = \phi(x)$ for every $x \in \mathbb{C}^n$, $s \in \mathbb{R}$. We define the Fenchel-Orlicz space
\[
\ell_{\phi} = \{(x^k) \in (\mathbb{C}^n)^{\mathbb{N}} : \exists \rho > 0 \mbox{ for which } \sum\limits_{k=1}^{\infty} \phi \Big(\frac{x_1^k}{\rho}, \cdots, \frac{x_n^k}{\rho}\Big) < \infty\}
\]
endowed with the complete norm
\[
\|(x^k)\|_{\phi} = \inf\{\rho > 0 : \sum\limits_{k=1}^{\infty} \phi \Big(\frac{x_1^k}{\rho}, \cdots, \frac{x_n^k}{\rho}\Big) \leq 1\}
\]

When $n = 1$ we have the definition of an Orlicz space, so we call $\phi$ an Orlicz function. If $\phi(t) > 0$ for $t > 0$ we say that $\phi$ is nondegenerate. We are particularly interested in this case because $\phi|_{[0, \infty)}$ is strictly increasing and therefore has an inverse. When we write $\phi^{-1}$ we will always be referring to this inverse. We also have another important particularity of Orlicz functions: it is enough to define $\phi$ on $[0, \infty)$, since for $x \in \mathbb{C}$ we have $\varphi(x) = \varphi(\left|x\right|)$.

A Young function $\phi$ on $\mathbb{C}^n$ satisfies the $\Delta_2$ condition (or is said to be in the class $\Delta_2$) if there is a constant $M > 0$ such that $\phi(2x) \leq M \phi(x)$ for every $x \in \mathbb{C}^n$. The following lemma is an easy exercise.

\begin{lemma}\label{lem:phi-constants}
Let $\phi$ be an Orlicz function in the class $\Delta_2$.
\begin{enumerate}
    \item There is $c \geq 0$ such that for every $x, y \in \mathbb{C}$
        \[
            \phi(x + y) \leq c (\phi(x) + \phi(y))
        \]
        
    \noindent In particular, if $\phi$ is strictly increasing then
    \[
    \phi^{-1}(s) + \phi^{-1}(t) \leq \phi^{-1}(c(s + t))
    \]
    for every $s, t \in [0, \infty)$
        
    \item If $a > 0$ then there is $D_{a} > 0$ such that
        \[
            \phi(ax) \leq D_{a} \phi(x)
        \]
    for every $x \in \mathbb{C}$.
\end{enumerate} 
\end{lemma}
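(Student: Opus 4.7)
The plan is to reduce both parts to the convexity of $\phi$ combined with a bootstrap from the $\Delta_2$ hypothesis, exploiting also the special structure that $\phi(x) = \phi(\abs{x})$ and that $\phi$ restricted to $[0, \infty)$ is nondecreasing. Before doing anything else I would record that last fact: since $\phi$ is convex with $\phi(0) = 0$ and $\phi(t) \geq 0$, for $0 \leq s \leq t$ we have $\phi(s) \leq (s/t) \phi(t) + (1 - s/t) \phi(0) \leq \phi(t)$, so $\phi\big|_{[0,\infty)}$ is nondecreasing (and, in the nondegenerate case, strictly increasing, which is what makes $\phi^{-1}$ well-defined on $[0,\infty)$).

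For part (1), I would write the midpoint identity $x + y = \frac{1}{2}(2x) + \frac{1}{2}(2y)$ and apply convexity to get
\[
\phi(x+y) \;\leq\; \tfrac{1}{2} \phi(2x) + \tfrac{1}{2}\phi(2y) \;\leq\; \tfrac{M}{2}\bigl(\phi(x) + \phi(y)\bigr),
\]
so $c = M/2$ works, where $M$ is the $\Delta_2$ constant of $\phi$. For the $\phi^{-1}$ inequality, if $\phi$ is strictly increasing set $u = \phi^{-1}(s)$ and $v = \phi^{-1}(t)$ (both in $[0,\infty)$), apply the first inequality to $x = u$, $y = v$, and then apply $\phi^{-1}$ to both sides using the monotonicity of $\phi^{-1}$, obtaining $\phi^{-1}(s) + \phi^{-1}(t) = u + v \leq \phi^{-1}(c(s+t))$.

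For part (2), I split into cases. If $0 < a \leq 1$, convexity and $\phi(0) = 0$ give $\phi(ax) = \phi(a x + (1-a) \cdot 0) \leq a \phi(x) + (1-a)\phi(0) = a \phi(x)$, so $D_a = a$ suffices (or just $D_a = 1$). If $a > 1$, choose $n \in \mathbb{N}$ with $2^n \geq a$. Using $\phi(x) = \phi(\abs{x})$ and the monotonicity of $\phi\big|_{[0,\infty)}$ established above, we get $\phi(ax) = \phi(a\abs{x}) \leq \phi(2^n \abs{x}) = \phi(2^n x)$, and iterating the $\Delta_2$ condition $n$ times yields $\phi(2^n x) \leq M^n \phi(x)$, so $D_a = M^n$ does the job.

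The whole argument is quite routine, so there is no single hard step; the only place where one must be slightly careful is justifying the monotonicity of $\phi$ on $[0,\infty)$ from the bare convexity and nonnegativity hypotheses, since this is what lets one pass from $\phi(a\abs{x})$ to $\phi(2^n \abs{x})$ in part (2) and what lets one invert $\phi$ in part (1). Everything else follows mechanically from convexity plus iterated applications of $\Delta_2$.
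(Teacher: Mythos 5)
Your proof is correct and is precisely the routine argument the paper has in mind when it dismisses the lemma as ``an easy exercise'' (the paper gives no proof to compare against). One small point worth noting for completeness: your $c = M/2$ must be at least $1$ for the first inequality to survive the choice $x=0$, but this is automatic since convexity with $\phi(0)=0$ forces $\phi(2x)\ge 2\phi(x)$, hence any valid $\Delta_2$ constant satisfies $M\ge 2$.
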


In the definition of twisted sum the middle space of the short exact sequence is a quasi-Banach space. Accordingly, we will need to work with quasi-Young functions: we say that $\phi : \mathbb{C}^n \rightarrow [0, \infty)$ is quasi-Young if $\phi(0) = 0$, $\lim_{t \rightarrow \infty} \phi(tx) = \infty$ for every $x \in \mathbb{C}^n \setminus \{0\}$, $\phi(e^{is} x) = \phi(x)$ for every $x \in \mathbb{C}^n$, $s \in \mathbb{R}$, and there is a constant $C > 0$ such that
\[
\phi(tx + (1 - t)y) \leq C (t \phi(x) + (1 - t) \phi(y))
\]
for every $x, y \in \mathbb{C}^n$, $t \in [0, 1]$. We say that $C$ is a quasi-convexity constant for $\phi$.

If we define $\ell_{\phi}$ and $\|\cdot\|_{\phi}$ as above then $\|\cdot\|$ is a quasinorm and $\ell_{\phi}$ is a quasi-Banach space. Actually, we may easily adapt the argument of \cite[Proposition 2.1]{ACK-Fenchel} to complex variables and show that $\|\cdot\|_{\phi}$ is equivalent to a norm.

\subsection{Quasilinear maps}

The standard way to build a twisted sum of $Z$ and $Y$ is through quasilinear maps \cite{KaltonPeck}. Let $W$ be a vector space containing $Y$. We say that $F : Z \rightarrow W$ is quasilinear from $Z$ into $Y$ if it is homogeneous and there is a constant $Q > 0$ such that
\[
\|F(z_1 + z_2) - F(z_1) - F(z_2)\|_Y \leq Q(\|z_1\|_Z + \|z_2\|_Z)
\]
for every $z_1, z_2 \in Z$. Let
\[
Y \oplus_F Z = \{(w, z) : z \in Z, w - Fz \in Y\} \subset W \times Y
\]
be endowed with the complete quasinorm $\|(w, z)\| = \|w - Fz\|_Y + \|z\|_Z$. Then $Y \oplus_F Z$ is a twisted sum of $Z$ and $Y$, since the map $y \mapsto (y, 0)$ from $Y$ into $Y \oplus_F Z$ defines an isometry and the respective quotient is isometric to $Z$. For example, the Kalton-Peck space is defined through a quasilinear map $\mathcal{K}_2 : \ell_2 \rightarrow \ell_{\infty}$ from $\ell_2$ into $\ell_2$ such that
\[
\mathcal{K}_2(x) = \sum x_n \log \frac{\left|x_n\right|}{\|x\|_2} e_n
\]
for every $x \in \ell_2$ of finite support.

\subsection{Complex interpolation}

If the standard way to build twisted sums is through quasilinear maps, complex interpolation is a established tool to build quasilinear maps. Let $V$ be a Hausdorff topological vector space, and let $\overline{X} = (X_0, X_1)$ be a couple of Banach spaces for which there are continuous injections $i_j : X_j \rightarrow V$, $j = 0, 1$. We call such a couple \emph{compatible}. We can always suppose that $V$ is a Banach space. Indeed, consider the sum space
\[
\Sigma(\overline{X}) = \{i_0(x_0) + i_1(x_1) : x_0 \in X_0, x_1 \in X_1\}
\]
with the complete norm.
\[
\|x\| = \inf\{\|x_0\|_{X_0} + \|x_1\|_{X_1} : x = i_0(x_0) + i_1(x_1)\}
\]
We may then replace $i_j$ by the inclusion map $X_j \subset \Sigma(\overline{X})$, $j = 0, 1$. 

Let $\mathbb{S} = \{z \in \mathbb{C} : 0 \leq Re(z) \leq 1\}$, and let $\mathcal{F}(\overline{X})$ be the space of all bounded continuous functions $f : \mathbb{S} \rightarrow \Sigma(\overline{X})$ which are analytic on $\mathbb{S}^{\mathrm{o}}$ and such that the functions $t \mapsto f(j + it)$ are continuous and bounded from $\mathbb{R}$ into $X_j$, $j = 0, 1$. The space $\mathcal{F}(\overline{X})$ is a Banach space with the norm
\[
\|f\| = \sup_{\substack{j = 0, 1 \\ t \in \mathbb{R}}} \|f(j + it)\|_{X_j}
\]

For $\theta \in (0, 1)$, let $X_{\theta} = \{f(\theta) : f \in \mathcal{F}(\overline{X})\}$ endowed with the quotient norm
\[
\|x\|_{\theta} = \inf\{\|f\| : f \in \mathcal{F}(\overline{X}), f(\theta) = x\}
\]

Then $X_{\theta}$ is an interpolation space with respect to $(X_0, X_1)$. The classical example is $(\ell_{\infty}, \ell_{1})_{\theta} = \ell_{p_{\theta}}$, where $\frac{1}{p_{\theta}} = \theta$. For more information on interpolation, see \cite{BerghLofstrom}.

\subsection{Extensions induced by complex interpolation}

Quite surprisingly, a construction of Rochberg \cite{HigherOrderRochberg} yields higher order extensions of the interpolation space. Here we present the Rochberg spaces from the point of view of quasilinear maps \cite{TwistedTwisted}. If $f$ is a function on some complex domain with values in a Banach space, we let $\hat{f}[j; z] = \frac{f^{(j)}(z)}{j!}$ be its $j$-th Taylor coefficient at $z$. Let $d^1 X_{\theta} = X_{\theta}$, and suppose the space $d^n X_{\theta}$ has already been defined. There is a homogeneous function $B_{\theta}^n : d^n X_{\theta} \rightarrow \mathcal{F}(\overline{X})$ such that
\begin{enumerate}
    \item $\widehat{B_{\theta}^n}[j; \theta] = x_j$ for every $x = (x_{n-1}, \cdots, x_0) \in d^n X_{\theta}$ and every $0 \leq j \leq n-1$;
    \item There is a constant $C_n > 0$ independent of $x \in d^n X_{\theta}$ such that $\|B_{\theta}^n(x)\| \leq C_n \|x\|$.
\end{enumerate}

Let $\Omega_{\theta}^n : d^n X_{\theta} \rightarrow \Sigma(\overline{X})$ be defined by $\Omega_{\theta}^n(x) = \widehat{B_{\theta}^n}[n; \theta]$. Then $\Omega_{\theta}^n$ is quasilinear from $d^n X_{\theta}$ into $X_{\theta}$ and we can define the derived space
\[
d^{n+1} X_{\theta} = X_{\theta} \oplus_{\Omega_{\theta}^n} d^n X_{\theta} = \{(w, x) : x \in d^n X_{\theta}, w - \Omega^n_{\theta}(x) \in X_{\theta}\} \subset \Sigma(\overline{X}) \times d^n X_{\theta}
\]
endowed with the quasinorm presented above. We notice that the spaces $d^n X_{\theta}$ are independent of the choice of maps $B_{\theta}^n$ satisfying (1) and (2), up to equivalence of quasinorms. Also, the quasinorm on $d^n X_{\theta}$ is always equivalent to a norm.

These spaces form higher order extensions of the space $X_{\theta}$. Again, the classical example comes from $(\ell_{\infty}, \ell_{1})$: the derived spaces at $\theta = \frac{1}{2}$ are the spaces $Z_2^{(n)}$ of the introduction. By higher order extensions we mean that we have short exact sequences
\[
\xymatrix{0 \rightarrow d^m X_{\theta} \rightarrow d^{m+n} X_{\theta} \rightarrow d^n X_{\theta} \rightarrow 0}
\]
where the inclusion map is $x \mapsto (x, 0)$ and the quotient map is $(x, y) \mapsto y$.

The following fact will be important: once we have $B_{\theta}^1$ it is possible to define $B_{\theta}^n$ inductively. Indeed, let $\varphi : \mathbb{S} \rightarrow \overline{\mathbb{D}}$ be a conformal map such that $\varphi(\theta) = 0$. Suppose $B_{\theta}^{n}(x)$ is defined for every $x \in d^{n} X_{\theta}$ and let $x = (x_{n}, \cdots, x_0) \in d^{n+1} X_{\theta}$. In particular, $x_{n} - \Omega_{\theta}^{n}(x_{n-1}, \cdots, x_0) \in X_{\theta}$. We can take
\[
B_{\theta}^{n+1}(x)(z) = B_{\theta}^{n}(x_{n-1}, \cdots, x_0)(z) + \frac{n! \varphi^{n-1}(z)}{k_{n}} B_{\theta}^{1}(x_{n} - \Omega_{\theta}^{n}(x_{n-1}, \cdots, x_0))(z)
\]
where $k_{n}$ is a numerical constant depending only on the derivative of $\varphi$ at $\theta$ which ensures that $\widehat{B_{\theta}^{n+1}}[n; \theta] = x_{n}$.

\section{Obtaining quasi-Young functions from complex interpolation}\label{sec:quasiYoung}

From now on, unless otherwise stated, we suppose our Orlicz functions satisfy the $\Delta_2$ condition. It is a classical result that if $\phi_0, \phi_1 : \mathbb{C} \rightarrow [0, \infty)$ are nondegenerate Orlicz functions then for every $\theta \in (0, 1)$ the function $\phi_{\theta} : [0, \infty) \rightarrow [0, \infty)$ given by $\phi_{\theta}^{-1} = (\phi_0^{-1})^{1 - \theta} (\phi_1^{-1})^{\theta}$ defines an Orlicz function, and we have
\[
(\ell_{\phi_0}, \ell_{\phi_1})_{\theta} = \ell_{\phi_{\theta}}
\]
with equivalence of norms (see \cite{Gustavsson1977}; see also \cite{OrliczVector, OrliczFamilies} for generalizations). Notice that everything works equally well if we let $\ell_{\phi_0} = \ell_{\infty}$ or $c_0$ and take $\phi_0^{-1} \equiv 1$.

For $x \in \ell_{\phi_{\theta}}$ we can take
\[
B^1_{\theta}(x)(z) = \|x\|_{\phi_{\theta}} \sum\limits_{n=1}^{\infty} \phi_0^{-1}\Big(\phi_{\theta}\Big(\frac{\left|x_n\right|}{\|x\|_{\phi_{\theta}}}\Big)\Big)^{1-z} \phi_1^{-1}\Big(\phi_{\theta}\Big(\frac{\left|x_n\right|}{\|x\|_{\phi_{\theta}}}\Big)\Big)^z sgn(x_n) e_n
\]
where $sgn(z) = \frac{z}{\left|z\right|}$ if $z \in \mathbb{C} \setminus \{0\}$ and $sgn(0) = 0$. Therefore
\[
\Omega_{\theta}^1(x) = \sum\limits_{n=1}^{\infty} \log \frac{\phi_0^{-1}\Big(\phi_{\theta}\Big(\frac{\left|x_n\right|}{\|x\|_{\phi_{\theta}}}\Big)\Big)}{\phi_1^{-1}\Big(\phi_{\theta}\Big(\frac{\left|x_n\right|}{\|x\|_{\phi_{\theta}}}\Big)\Big)} x_n e_n
\]

\medskip

This together with the definition of the maps $B_{\theta}^{n}$, will inspire our definition of Young functions. Since we must deal with one coordinate at a time we cannot use the norm of the full vector to build a Young function, so we will consider the coordinates of $B_{\theta}^1(x)$ when $\|x\|_{\ell_{\phi_{\theta}}} = 1$. From now on we fix $\theta \in (0, 1)$.

\begin{definition}
For $x \in \mathbb{C}$ let $g_x : \mathbb{S} \rightarrow \mathbb{C}$ be given by
\[
g_x(z) = \phi_0^{-1}(\phi_{\theta}(\left|x\right|))^{1-z} \phi_1^{-1}(\phi_{\theta}(\left|x\right|))^z sgn(x)
\]
Let $n \geq 2$ and suppose that $g_x$ has been defined for every $x \in \mathbb{C}^{n-1}$. Let $x = (x_{n-1}, \cdots, x_0) \in \mathbb{C}^n$ and define $g_x : \mathbb{S} \rightarrow \mathbb{C}$ by
\[
g_x(z) = g_{(x_{n-2}, ..., x_0)}(z) + \frac{\varphi^{n-1}(z)}{k_n} g_{n!(x_{n-1} - \hat{g}_{(x_{n-2}, ..., x_{0})}[n-1; \theta])}(z)
\]
where $k_n$ is from the definition of $B_{\theta}^n$.
\end{definition}

Notice that $\hat{g}_x[j; \theta] = x_j$ for $0 \leq j \leq n-1$. Now we define our quasi-Young functions:
\begin{definition}\label{def-phi-p-n}
Let $\phi_{\theta, 1} = \phi_\theta$. For $n \geq 2$, let $\phi_{\theta, n} : \mathbb{C}^{n} \rightarrow [0, \infty)$ be given by
\[
\phi_{\theta, n}(x_{n-1}, ..., x_{0}) = \phi_{\theta, n-1}(x_{n-2}, ..., x_0) + \phi_\theta(x_{n-1} - \hat{g}_{(x_{n-2}, ..., x_0)}[n-1; \theta])
\]
\end{definition}

Of course, we must prove that those are indeed quasi-Young functions. We will need the following lemma.

\begin{lemma}\label{lem:gestimate}
For every $n \geq 1$ there are constants $\alpha_n, \beta_n$ such that for every $x = (x_{n-1}, \cdots, x_0) \in \mathbb{C}^n$ we have
\[
\left|g_{x}(it)\right| \leq \alpha_n \phi_0^{-1}(\beta_n \phi_{\theta, n}(x)) 
\]
and
\[
\left|g_{x}(1+it)\right| \leq \alpha_n \phi_1^{-1}(\beta_n \phi_{\theta, n}(x)) 
\]
\end{lemma}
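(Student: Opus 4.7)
The plan is to proceed by induction on $n$. For the base case $n = 1$ and $x \in \mathbb{C}$, set $a = \phi_0^{-1}(\phi_\theta(\left|x\right|))$ and $b = \phi_1^{-1}(\phi_\theta(\left|x\right|))$; both are nonnegative reals, so $\left|a^{1-it}\right| = a$ and $\left|b^{it}\right| = 1$, giving $\left|g_x(it)\right| = a = \phi_0^{-1}(\phi_{\theta,1}(x))$, and analogously $\left|g_x(1+it)\right| = b = \phi_1^{-1}(\phi_{\theta,1}(x))$. Thus $\alpha_1 = \beta_1 = 1$ works.

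For the inductive step at $n \geq 2$, apply the triangle inequality to the recursive definition of $g_x$. Since $\varphi : \mathbb{S} \to \overline{\mathbb{D}}$, one has $\left|\varphi(z)\right|^{n-1} \leq 1$ on $\mathbb{S}$, so for $x = (x_{n-1}, \ldots, x_0)$,
\[
\left|g_x(it)\right| \leq \left|g_{(x_{n-2}, \ldots, x_0)}(it)\right| + \frac{1}{\left|k_n\right|} \left|g_{n!(x_{n-1} - \hat{g}_{(x_{n-2}, \ldots, x_0)}[n-1; \theta])}(it)\right|.
\]
Bound the first term via the inductive hypothesis by $\alpha_{n-1} \phi_0^{-1}(\beta_{n-1}\, \phi_{\theta, n-1}(x_{n-2}, \ldots, x_0))$, and the second via the base case $n=1$ (its argument is a scalar) by $\phi_0^{-1}(\phi_\theta(n!\, |x_{n-1} - \hat{g}_{(x_{n-2}, \ldots, x_0)}[n-1; \theta]|))$.

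Now invoke Lemma~\ref{lem:phi-constants}: part (2) applied to $\phi_\theta$ yields $\phi_\theta(n!\, t) \leq D_{n!}\, \phi_\theta(t)$, and part (1) applied to $\phi_0$ yields $\phi_0^{-1}(s) + \phi_0^{-1}(t) \leq \phi_0^{-1}(c_0 (s + t))$. Combining these with the recursive definition
\[
\phi_{\theta, n}(x) = \phi_{\theta, n-1}(x_{n-2}, \ldots, x_0) + \phi_\theta\bigl(x_{n-1} - \hat{g}_{(x_{n-2}, \ldots, x_0)}[n-1; \theta]\bigr)
\]
collapses the two summands into a single expression of the form $\alpha_n \phi_0^{-1}(\beta_n\, \phi_{\theta, n}(x))$, where $\alpha_n, \beta_n$ absorb $\alpha_{n-1}$, $\beta_{n-1}$, $1/|k_n|$, $D_{n!}$ and $c_0$. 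The estimate at $z = 1 + it$ is entirely symmetric, with $\phi_1^{-1}$ in place of $\phi_0^{-1}$.

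The argument is mostly bookkeeping; the genuine technical point is the one-sided superadditivity $\phi_j^{-1}(s) + \phi_j^{-1}(t) \leq \phi_j^{-1}(c(s+t))$ provided by Lemma~\ref{lem:phi-constants}(1). This is exactly the reason the $\Delta_2$ hypothesis is in force, since without it one could not convert a sum of two values of $\phi_j^{-1}$ into a single value, and the inductive invariant $\alpha_n \phi_j^{-1}(\beta_n \phi_{\theta, n}(x))$ would not be preserved under the recursion.
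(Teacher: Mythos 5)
Your proof is correct and follows the same inductive strategy as the paper: triangle inequality on the recursive definition of $g_x$ together with $|\varphi(z)|\le 1$, then the $\Delta_2$ constants of Lemma~\ref{lem:phi-constants} (part (2) for $D_{n!}$, part (1) for the one-sided superadditivity of $\phi_j^{-1}$) to collapse the two summands into a single value of $\phi_j^{-1}$. A small point in your favor: you correctly identify that the superadditivity constant must come from $\phi_0$ (resp.\ $\phi_1$) rather than $\phi_\theta$, whereas the paper's proof nominally introduces $c$ as the Lemma~\ref{lem:phi-constants} constant for $\phi_\theta$.
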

\begin{proof}
We prove it by induction. The case $n = 1$ is straightforward, so suppose the result is true for $n-1$ and let $c$ and $D_{n!}$ be the constants of Lemma \ref{lem:phi-constants} for $\phi_{\theta}$. We have:
\begin{eqnarray*}
\left|g_{x}(it)\right| & = & \left|g_{(x_{n-2}, ..., x_0)}(it) + \frac{\varphi^{n-1}(it)}{k_n} g_{(n!(x_{n-1} - \hat{g}_{(x_{n-2}, ..., x_{0})}[n-1; \theta])}(it)\right| \\
    & \leq & \alpha_{n-1} \phi_0^{-1}(\beta_{n-1} \phi_{\theta, n-1}(x_{n-2}, ..., x_0)) + \frac{\alpha_{n-1}}{\left|k_n\right|} \phi_0^{-1}(\beta_{n-1} \phi_{\theta}(n!(x_{n-1} - \hat{g}_{(x_{n-2}, ..., x_{0})}[n-1; \theta])))  \\
    & \leq & \alpha_{n-1} \phi_0^{-1}(\beta_{n-1} \phi_{\theta, n-1}(x_{n-2}, ..., x_0)) + \frac{\alpha_{n-1}}{\left|k_n\right|} \phi_0^{-1}(\beta_{n-1} D_{n!} \phi_{\theta}(x_{n-1} - \hat{g}_{(x_{n-2}, ..., x_{0})}[n-1; \theta])) \\
    & \leq & \alpha_{n-1} \max\{1, \left|k_n\right|^{-1}\} \phi_0^{-1}(c \beta_{n-1} D_{n!} \phi_{\theta, n}(x)) 
\end{eqnarray*}
The proof for $1 + it$ is similar.
\end{proof}

\begin{theorem}\label{thm:quasiconvexfunctions}
For every $n \geq 1$ the function $\phi_{\theta, n}$ is quasi-Young.
\end{theorem}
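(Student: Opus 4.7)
My plan is to induct on $n$; the base case $n=1$ is $\phi_\theta$, which is already Young. Fix $n \geq 2$ and assume inductively that $\phi_{\theta, n-1}$ is quasi-Young with quasi-convexity constant $C_{n-1}$.

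The algebraic axioms of a quasi-Young function — vanishing at zero, coercivity, and phase invariance — propagate through the recursion once one verifies by induction the two companion identities $g_0 \equiv 0$ and $g_{e^{is}x}(z) = e^{is} g_x(z)$, both immediate from the recursive definition. For coercivity at a nonzero $x = (x_{n-1}, \ldots, x_0)$, I split into the cases $(x_{n-2}, \ldots, x_0) \neq 0$ (handled by the induction hypothesis on the first summand of $\phi_{\theta,n}$) and $(x_{n-2}, \ldots, x_0) = 0$ with $x_{n-1} \neq 0$ (where $g_0 \equiv 0$ reduces $\phi_{\theta, n}(tx)$ to $\phi_\theta(t x_{n-1})$, which diverges).

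For quasi-convexity, take $x, y \in \mathbb{C}^n$, $t \in [0,1]$, set $u = tx + (1-t)y$, and write $x' = (x_{n-2}, \ldots, x_0)$, and analogously $y'$, $u'$. The induction hypothesis gives $\phi_{\theta, n-1}(u') \leq C_{n-1}(t \phi_{\theta, n-1}(x') + (1-t)\phi_{\theta, n-1}(y'))$. Decomposing
\[
u_{n-1} - \hat{g}_{u'}[n-1; \theta] = t(x_{n-1} - \hat{g}_{x'}[n-1;\theta]) + (1-t)(y_{n-1} - \hat{g}_{y'}[n-1;\theta]) + E
\]
with $E := t\hat{g}_{x'}[n-1;\theta] + (1-t)\hat{g}_{y'}[n-1;\theta] - \hat{g}_{u'}[n-1;\theta]$, applying Lemma \ref{lem:phi-constants}(1) twice, and using the elementary bound $\phi_\theta(ts) \leq t\phi_\theta(s)$ for $t \in [0,1]$, the whole problem reduces to showing there is a constant $K$ independent of $x, y, t$ with $\phi_\theta(E) \leq K\bigl(t\phi_{\theta, n-1}(x') + (1-t)\phi_{\theta, n-1}(y')\bigr)$.

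This last estimate is, I expect, the real obstacle, and I would prove it via a three-lines argument. Observe $E = \hat{h}[n-1;\theta]$ for $h(z) := g_{u'}(z) - tg_{x'}(z) - (1-t)g_{y'}(z)$; because $\hat{g}_{x'}[j;\theta] = x_j$ for $0 \leq j \leq n-2$ (and analogously for $y'$, $u'$), the Taylor coefficients of $h$ at $\theta$ of orders $0, 1, \ldots, n-2$ all vanish. Hence $h = \varphi^{n-1} k$ with $k$ holomorphic on $\mathbb{S}^{\mathrm{o}}$, continuous and bounded on $\mathbb{S}$, satisfying $|E| = |\varphi'(\theta)|^{n-1}|k(\theta)|$ and $|k| = |h|$ on $\partial \mathbb{S}$ (since $|\varphi| \equiv 1$ there). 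Invoking Lemma \ref{lem:gestimate} on the $(n-1)$-tuples $x', y', u'$, the concavity of $\phi_0^{-1}$ and $\phi_1^{-1}$, and the inductive quasi-convexity of $\phi_{\theta, n-1}$, I obtain bounds $|h(it)| \leq M_0 := 2\alpha_{n-1}\phi_0^{-1}(\beta_{n-1}C_{n-1}A')$ and $|h(1+it)| \leq M_1 := 2\alpha_{n-1}\phi_1^{-1}(\beta_{n-1}C_{n-1}A')$, where $A' := t\phi_{\theta, n-1}(x') + (1-t)\phi_{\theta, n-1}(y')$. Hadamard's three-lines theorem applied to $k$ then gives $|k(\theta)| \leq M_0^{1-\theta}M_1^\theta$, and the interpolation identity $\phi_\theta^{-1} = (\phi_0^{-1})^{1-\theta}(\phi_1^{-1})^\theta$ collapses this to $|E| \leq |\varphi'(\theta)|^{n-1}(2\alpha_{n-1})\phi_\theta^{-1}(\beta_{n-1}C_{n-1}A')$. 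Applying $\phi_\theta$ one final time and invoking Lemma \ref{lem:phi-constants}(2) delivers $\phi_\theta(E) \leq K A'$, closing the induction. The conceptual content: the Rochberg cancellations force $h$ to vanish of precisely the right order so that the three-lines bound on $k$ aligns cleanly with the geometric-mean factorization of $\phi_\theta^{-1}$.
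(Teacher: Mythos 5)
Your proposal is correct and follows essentially the same route as the paper's proof: reduce quasi-convexity to estimating the error term $E$, observe that the auxiliary analytic function vanishes to order $n-1$ at $\theta$ because of the Taylor-coefficient identities, factor out the vanishing, control the boundary values via Lemma~\ref{lem:gestimate} together with the concavity of $\phi_0^{-1}, \phi_1^{-1}$ and the inductive quasi-convexity, apply the three-lines theorem, and collapse the geometric mean through $\phi_\theta^{-1} = (\phi_0^{-1})^{1-\theta}(\phi_1^{-1})^\theta$. The only cosmetic deviation is that the paper divides $h$ by $(z-\theta)^{n-1}$ (using $d(\theta,\partial\mathbb{S})$ to bound that factor on the boundary), whereas you divide by $\varphi^{n-1}$ (which has unit modulus on $\partial\mathbb{S}$ and relates $E$ to $k(\theta)$ via $|\varphi'(\theta)|^{n-1}$); these are interchangeable normalizations and produce the same estimate up to constants.
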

\begin{proof}
We will prove it by induction. The case $n = 1$ is clear, so suppose that $\phi_{\theta, n-1}$ is quasi-Young with quasi-convexity constant $C_{n-1}$. Notice that $\phi_{\theta, n}(e^{is} x) = \phi_{\theta, n}(x)$,
\[
\lim_{t \rightarrow \infty} \phi_{\theta, n}(tx) = \infty
\]
for every $x \neq 0$ in $\mathbb{C}^{n}$, $s \in \mathbb{R}$, and $\phi_{\theta. n}(0) = 0$. It remains to prove the quasi-convexity of $\phi_{\theta, n}$. It is easy to see that this reduces to estimating
\[
\phi_\theta\Big(\sum\limits_{j} (t_j \hat{g}_{x^j})[n-1; \theta] - \hat{g}_{\sum\limits_{j} t_j x^j}[n-1; \theta]\Big)
\]
for $t_j \in [0, 1]$, $t_0 + t_1 = 1$, and $x^j \in \mathbb{C}^{n-1}$, $j = 0, 1$. Let $h_1 : \mathbb{S} \rightarrow \mathbb{C}$ be given by
\[
h_1(z) = \sum (t_j g_{x^j}(z)) -g_{\sum t_j x^j}(z)
\]

Notice that $\hat{h_1}[j; \theta] = 0$, for $j = 0, ..., n-2$. This means that the function $h_2(z) = \frac{h_1(z)}{(z - \theta)^{n-1}}$ is bounded on $\mathbb{S}$, analytic on $\mathbb{S}^{\mathrm{o}}$, and what we want to estimate is precisely $\phi_{\theta}(h_2(\theta))$.

Let $d = d(\theta, \partial \mathbb{S})$. By Lemma \ref{lem:gestimate} we have:
\begin{eqnarray*}
\left|h_2(it)\right| & \leq & \frac{1}{d^{n-1}} \Big(\sum (t_j |g_{x^j}(it)|) + |g_{\sum t_j x^j}(it)|\Big) \\
& \leq & \frac{1}{d^{n-1}} \sum (t_j \alpha_{n-1} \phi_0^{-1}(\beta_{n-1} \phi_{\theta, n-1}(x^j))) +  \frac{\alpha_{n-1}}{d^{n-1}} \phi_0^{-1}\Big(\beta_{n-1} \phi_{\theta, n-1}\Big(\sum t_j x^j\Big)\Big) \\
& \leq & \frac{\alpha_{n-1}}{d^{n-1}}\Big[ \phi_0^{-1}\Big(\beta_{n-1} \sum t_j\phi_{\theta, n-1}(x^j)\Big) +  \phi_0^{-1}\Big(\beta_{n-1} C_{n-1} \sum t_j \phi_{\theta, n-1}(x^j)\Big)\Big] \\
& \leq & \frac{2 \alpha_{n-1}}{d^{n-1}} \phi_0^{-1}\Big(\beta_{n-1} C_{n-1} \sum t_j \phi_{\theta, n-1}(x^j)\Big)
\end{eqnarray*}

We get a similar estimate for $1 + it$, substituting $\phi_0$ by $\phi_1$. So, by the three-lines lemma,
\begin{eqnarray*}
\left|h_2(\theta)\right| & \leq & \frac{2 \alpha_{n-1}}{d^{n-1}} \phi_0^{-1}\Big(\beta_{n-1} C_{n-1} \sum t_j \phi_{\theta, n-1}(x^j)\Big)^{1-\theta} \phi_1^{-1}\Big(\beta_{n-1} C_{n-1} \sum t_j \phi_{\theta, n-1}(x^j)\Big)^{\theta} \\
    & = & \frac{2 \alpha_{n-1}}{d^{n-1}} \phi_{\theta}^{-1}\Big(\beta_{n-1} C_{n-1} \sum t_j \phi_{\theta, n-1}(x^j)\Big) 
\end{eqnarray*}

Applying $\phi_{\theta}$ and letting $D_{\frac{2 \alpha_{n-1}}{d^{n-1}}}$ be the constant of Lemma \ref{lem:phi-constants} for $\phi_{\theta}$, we have
\begin{eqnarray*}
\phi_{\theta}(h_2(\theta)) & \leq & \phi_{\theta}\Big(\frac{2 \alpha_{n-1}}{d^{n-1}} \phi_{\theta}^{-1}\Big(\beta_{n-1} C_{n-1} \sum t_j \phi_{\theta, n-1}(x^j)\Big)\Big) \\
& \leq & D_{\frac{2 \alpha_{n-1}}{d^{n-1}}}\beta_{n-1} C_{n-1} \sum t_j \phi_{\theta, n-1}(x^j)
\end{eqnarray*}

It follows that $\phi_{\theta, n}$ is quasi-convex.
\end{proof}

\noindent \textbf{Observation:} The technique of the previous proof may also be used to show that each $\phi_{\theta, n}$ satisfies the $\Delta_2$ condition. Indeed, one may check that it is enough to estimate
\[
\phi_{\theta}(2\hat{g}_{x}[n-1; \theta] - \hat{g}_{2x}[n-1; \theta]),
\]
for $x \in \mathbb{C}^{n-1}$, which may done by taking $h_1 = 2 g_{x} - g_{2x}$.

\section{Derived spaces are Fenchel-Orlicz spaces}\label{sec:derivedareFenchel}

Our goal now is to prove that if $\phi_0$ and $\phi_1$ are nondegenerate Orlicz functions satisfying the $\Delta_2$ condition (or if we allow $\ell_{\phi_0} = \ell_{\infty}, c_0$) and we let $\overline{X} = (\ell_{\phi_0}, \ell_{\phi_1})$ then the derived space $d^n X_{\theta}$ is isomorphic to $\ell_{\phi_{\theta, n}}$. We will use the following result:

\begin{proposition}[\cite{ACK-Fenchel}, Proposition 3.2]
Let $X$ be a sequence space which is complete under the quasinorms $\|\cdot\|_1$ and $\|\cdot\|_2$, and suppose that the coordinate functionals are continuous in each norm. Then $\|\cdot\|_1$ and $\|\cdot\|_2$ are equivalent.
\end{proposition}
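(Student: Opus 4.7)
The plan is to reduce the statement to an application of the closed graph theorem in the setting of F-spaces. A quasi-Banach space is an F-space: its quasinorm induces a complete, translation-invariant metric (for instance via the Aoki--Rolewicz theorem, which provides an equivalent $p$-norm for some $p \in (0,1]$). Thus both $(X, \|\cdot\|_1)$ and $(X, \|\cdot\|_2)$ are F-spaces, and the right tool is the closed graph theorem for linear maps between F-spaces. Concretely, I would consider the identity map $I : (X, \|\cdot\|_1) \to (X, \|\cdot\|_2)$ and show that it is bounded; by symmetry the same argument applies to $I^{-1}$, yielding equivalence of the two quasinorms.

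The whole content of the proof is checking that $I$ has closed graph, and this is where the hypothesis on continuity of the coordinate functionals is used. Given a sequence $(x^{(m)})$ with $x^{(m)} \to x$ in $\|\cdot\|_1$ and $x^{(m)} = I(x^{(m)}) \to y$ in $\|\cdot\|_2$, I would fix an index $k$ and apply the $k$-th coordinate functional, which by hypothesis is continuous in both quasinorms. This forces $x^{(m)}_k \to x_k$ and $x^{(m)}_k \to y_k$ in the scalar field, so $x_k = y_k$. Since this holds for every $k$, the sequences $x$ and $y$ coincide, and the graph of $I$ is closed.

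The closed graph theorem then produces a constant $C > 0$ with $\|x\|_2 \le C\|x\|_1$, and interchanging the roles of the two quasinorms yields the reverse inequality. I do not anticipate any real obstacle. The only mild subtlety is invoking the closed graph theorem outside the Banach setting, but this is standard for F-spaces, so the argument goes through verbatim in the quasi-Banach category in which $\ell_{\phi_{\theta,n}}$ and $d^n X_\theta$ live.
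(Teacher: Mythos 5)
Your proof is correct and is the standard argument for this kind of statement; the cited reference (\cite{ACK-Fenchel}, Proposition 3.2) uses the same closed-graph-theorem-for-$F$-spaces strategy, with the continuity of the coordinate functionals serving exactly to force the graph of the identity to be closed. Nothing to add.
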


\begin{lemma}\label{lem:h_is_in_F}
Let $n \geq 1$ and $(x_{n-1}, \cdots, x_0) \in \ell_{\phi_{\theta, n}}$. Then the function $h_{(x_{n-1}, ..., x_0)}$ defined by
\[
h_{(x_{n-1}, ..., x_0)}(z) = \sum g_{(x_{n-1}(k), ..., x_0(k))}(z) e_k
\]
is in $\mathcal{F}(\overline{X})$.
\end{lemma}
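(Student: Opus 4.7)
The plan is to realize $h = h_{(x_{n-1}, \cdots, x_0)}$ as the limit in $\mathcal{F}(\overline{X})$ of its truncations $h_N(z) = \sum_{k=1}^N g_{x(k)}(z)\, e_k$, writing $x(k) = (x_{n-1}(k), \cdots, x_0(k))$. A short induction on $n$ using the recursive definition of $g_x$ shows that each scalar function $z \mapsto g_{x(k)}(z)$ is bounded on $\mathbb{S}$ and analytic on $\mathbb{S}^{\mathrm{o}}$ (using that $\varphi$ maps $\mathbb{S}$ into $\overline{\mathbb{D}}$), so every $h_N$ is trivially a member of $\mathcal{F}(\overline{X})$: the finite sum is a bounded analytic $\Sigma(\overline{X})$-valued function with finitely supported boundary values, hence in every $\ell_{\phi_j}$. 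Since $\mathcal{F}(\overline{X})$ is a Banach space, it suffices to prove that $(h_N)$ is Cauchy, that is, $\sup_{j \in \{0,1\},\, t \in \mathbb{R}} \|h_N(j+it) - h_M(j+it)\|_{\ell_{\phi_j}} \to 0$ as $M, N \to \infty$.

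The key estimate for the Cauchy property comes from Lemma \ref{lem:gestimate}: for $j \in \{0,1\}$ and $t \in \mathbb{R}$,
\[
\bigl|g_{x(k)}(j + it)\bigr| \leq \alpha_n\, \phi_j^{-1}\!\bigl(\beta_n\, \phi_{\theta, n}(x(k))\bigr),
\]
which after applying $\phi_j$ gives $\phi_j(|g_{x(k)}(j+it)|/\alpha_n) \leq \beta_n\, \phi_{\theta,n}(x(k))$. The hypothesis $(x_{n-1}, \cdots, x_0) \in \ell_{\phi_{\theta,n}}$ combined with the $\Delta_2$ property of $\phi_{\theta,n}$ (the observation after Theorem \ref{thm:quasiconvexfunctions}) guarantees $\sum_k \phi_{\theta,n}(x(k)) < \infty$, giving a $t$-independent summable majorant. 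To convert this modular tail bound into an Orlicz-norm tail bound, I would use Lemma \ref{lem:phi-constants}(2) with $a = \alpha_n/\varepsilon$:
\[
\sum_{k > M} \phi_j\!\left(\frac{|g_{x(k)}(j + it)|}{\varepsilon}\right) \leq D_{\alpha_n/\varepsilon}\, \beta_n \sum_{k > M} \phi_{\theta, n}(x(k)),
\]
and choose $M_0$ large enough so the right-hand side is $\leq 1$ for every $M \geq M_0$ and every $t$. This yields $\|h_N(j+it) - h_M(j+it)\|_{\ell_{\phi_j}} \leq \varepsilon$ uniformly in $t$ and $j$, so $(h_N)$ is Cauchy in $\mathcal{F}(\overline{X})$ and converges to some $\tilde{h} \in \mathcal{F}(\overline{X})$.

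The step I expect to be most delicate is identifying this abstract limit $\tilde{h}$ with the pointwise sum $h$. For this I would invoke the Banach-space-valued maximum modulus principle for $\Sigma(\overline{X})$-valued analytic functions applied to $h_N - h_M$, giving
\[
\sup_{z \in \mathbb{S}} \|h_N(z) - h_M(z)\|_{\Sigma(\overline{X})} \leq \|h_N - h_M\|_{\mathcal{F}(\overline{X})}.
\]
Thus $h_N(z) \to \tilde{h}(z)$ in $\Sigma(\overline{X})$ for every $z \in \mathbb{S}$, and continuity of coordinate evaluations on $\Sigma(\overline{X})$ forces the $k$th coordinate of $\tilde{h}(z)$ to equal $g_{x(k)}(z)$, giving $\tilde{h} = h$ as required.
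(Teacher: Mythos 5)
Your proof departs from the paper's: the paper proceeds by induction on $n$, using the recursive structure $g_{(x_{n-1},\ldots,x_0)} = g_{(x_{n-2},\ldots,x_0)} + \frac{\varphi^{n-1}}{k_n}\, g_{n!(x_{n-1}-\hat g[n-1;\theta])}$ to reduce to the base case $n=1$ (where it needs only $\Delta_2$ of $\phi_\theta$), whereas you run a single truncation-Cauchy argument at level $n$ directly from Lemma \ref{lem:gestimate}, at the cost of invoking the $\Delta_2$ property of $\phi_{\theta,n}$ for $n\ge 2$, which the paper states only as an unproved observation after Theorem \ref{thm:quasiconvexfunctions}. That trade-off is defensible and your argument is cleaner in the generic case, so it could stand as an alternative route \emph{in the nondegenerate setting}.

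However, there is a genuine gap: your proof silently assumes $\phi_0$ and $\phi_1$ are both nondegenerate, while Section \ref{sec:derivedareFenchel} explicitly allows $\ell_{\phi_0}=\ell_\infty$ or $c_0$ (convention $\phi_0^{-1}\equiv 1$), and the lemma must hold there too. In that case the truncation-Cauchy argument breaks on the $j=0$ boundary. Already for $n=1$, $g_{x_0(k)}(it)=\phi_1^{-1}(\phi_\theta(|x_0(k)|))^{it}\,\mathrm{sgn}(x_0(k))$ has modulus exactly $1$ whenever $x_0(k)\ne 0$, so $\|h_N(it)-h_M(it)\|_{\ell_\infty}=1$ for all $M<N$ as long as $x_0$ has infinite support; the partial sums are not Cauchy in $\ell_\infty$ and your uniform tail estimate is vacuous because there is no honest $\phi_0$ to apply. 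The paper sidesteps this with a separate argument (weak*-analyticity against $\ell_1$, plus a direct check of continuity and boundedness of the boundary values into $\ell_\infty$ and $\ell_{\phi_1}$). You would need to add that case, or restrict the lemma to nondegenerate $\phi_0$ and then lose the examples built from $(\ell_\infty,\ell_{\phi_1})$ that motivate the paper.

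Two smaller points worth making explicit if you keep this route: first, that each scalar $g_{x(k)}$ is entire and bounded on $\mathbb{S}$ relies on $\phi_\theta$ being nondegenerate (so $\phi_j^{-1}(\phi_\theta(|x_0(k)|))>0$ when $x_0(k)\ne 0$, making $a^{1-z}b^z$ well-defined), and one should note $g_0\equiv 0$; second, one cannot replace $x$ by $x/\rho$ to dodge the $\Delta_2$ requirement for $\phi_{\theta,n}$, since $g$ is not homogeneous in $x$, so the appeal to the observation is genuinely load-bearing.
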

\begin{proof}
We will prove it by induction in $n$. The base case is $n = 1$, so we have $x_0 \in \ell_{\phi_{\theta}}$. We must divide in two cases now:

\noindent \emph{Case 1: $\phi_0$ and $\phi_1$ are nondegenerate}

For $n_1 \leq n_2$ define
\[
h_{x_0}^{n_1, n_2}(z) = \sum\limits_{k = n_1}^{n_2} g_{x_0(k)}(z) e_k = \sum\limits_{k = n_1}^{n_2} \phi_0^{-1}(\phi_{\theta}(\left|x_0(k)\right|))^{1-z} \phi_1^{-1}(\phi_{\theta}(\left|x_0(k)\right|))^z sgn(x_0(k)) e_k
\]

Then
\[
\|h_{x_0}^{n_1, n_2}\|_{\mathcal{F}(\overline{X})} = \max\{\Big\|\sum\limits_{k=n_1}^{n_2} \phi_0^{-1}(\phi_{\theta}(\left|x_0(k)\right|)) e_k\Big\|_{\ell_{\phi_0}}, \Big\|\sum\limits_{k=n_1}^{n_2} \phi_1^{-1}(\phi_{\theta}(\left|x_0(k)\right|))e_k\Big\|_{\ell_{\phi_1}}\}
\]

Notice that $\sum\limits_{k=1}^{\infty} \phi_0^{-1}(\phi_{\theta}(\left|x_0(k)\right|)) e_k \in \ell_{\phi_0}$ and $\sum\limits_{k=1}^{\infty} \phi_1^{-1}(\phi_{\theta}(\left|x_0(k)\right|)) e_k \in \ell_{\phi_1}$, so that $\lim_{n_1, n_2} \|h_{x_0}^{n_1, n_2}\|_{\mathcal{F}(\overline{X})} = 0$, and therefore $h_{x_0}$ is the limit of the Cauchy sequence $(h_{x_0}^{1, n})_n$ in $\mathcal{F}(\overline{X})$.

\noindent\emph{Case 2: $\phi_0$ is degenerate}

In this case $\ell_{\phi_0} = \ell_{\infty} = \Sigma(\overline{X})$ with equivalence of norms. We have
\[
h_{x_0}(z) = \sum\limits_{k = 1}^{\infty} \phi_1^{-1}(\phi_{\theta}(\left|x_0(k)\right|))^z sgn(x_0(k)) e_k \in \ell_{\infty}
\]
If $y \in \ell_1$ then $(y, h(z)) = \sum\limits_{k=1}^{\infty} y(k) \phi_1^{-1}(\phi_{\theta}(\left|x_0(k)\right|))^z sgn(x_0(k))$ is absolutely convergent and the convergence is uniform on $z$. Therefore $(y, h(z))$ is continuous and bounded on $\mathbb{S}$ and analytic on $\mathbb{S}^{\mathrm{o}}$. Since $y \in \ell_1$ was arbitrary, this implies that $h_{x_0}$ is continuous and bounded on $\mathbb{S}$ and analytic on $\mathbb{S}^{\mathrm{o}}$ as a function with values in $\ell_{\infty}$. In particular, $t \mapsto h_{x_0}(it) \in \ell_{\infty}$ is continuous and bounded, and the argument of the previous case may be used to show that so is $t \mapsto h_{x_0}(1 + it) \in \ell_{\phi_1}$. This shows that $h_{x_0} \in \mathcal{F}(\overline{X})$.

Now, by induction, it is enough to prove that for $n \geq 2$ if $(x_{n-1}, \cdots, x_0) \in \ell_{\phi_{\theta, n}}$ then $(x_{n-1}(k) - \hat{g}_{(x_{n-2}(k), \cdots, x_0(k))}[n-1; \theta])_k \in \ell_{\phi_{\theta}}$. But this is a direct consequence of the definition of $\phi_{\theta, n}$.
\end{proof}

\begin{proposition}
If $n \geq 1$ then $d^n X_{\theta} = \ell_{\phi_{\theta. n}}$ as sets.
\end{proposition}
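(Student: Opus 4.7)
The plan is induction on $n$. The base case $n = 1$ is the classical theorem of Gustavsson recalled at the start of Section \ref{sec:quasiYoung}: $d^1 X_\theta = X_\theta = (\ell_{\phi_0}, \ell_{\phi_1})_\theta = \ell_{\phi_\theta} = \ell_{\phi_{\theta, 1}}$ as sets.

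For the inductive step, I will assume $d^n X_\theta = \ell_{\phi_{\theta, n}}$ as sets and exploit the fact that the coordinatewise definitions of $g_x$ and $\phi_{\theta, n}$ were engineered to mirror the inductive Rochberg construction of $B_\theta^n$ and $\Omega_\theta^n$ from the end of Section \ref{sec:Background}. Concretely, given $x \in \ell_{\phi_{\theta, n}}$, the function $h_x(z) = \sum_k g_{x(k)}(z) e_k$ lies in $\mathcal{F}(\overline{X})$ by Lemma \ref{lem:h_is_in_F}, satisfies $\hat{h}_x[j;\theta] = x_j$ for $0 \leq j \leq n-1$ by the observation following the definition of $g_x$, and has each coordinate controlled by Lemma \ref{lem:gestimate}. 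After suitable normalization to restore homogeneity (mimicking the formula for $B_\theta^1$ at the start of Section \ref{sec:quasiYoung}, since $g_x$ itself is not linear in $x$), $h_x$ becomes a valid choice of $B_\theta^n$ whose associated quasilinear map satisfies coordinatewise $\Omega_\theta^n(x)(k) = \|x\|_{\phi_{\theta, n}} \, \hat{g}_{x(k)/\|x\|_{\phi_{\theta, n}}}[n;\theta]$.

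With this choice, $(w, x)$ lies in $d^{n+1} X_\theta$ exactly when $x \in \ell_{\phi_{\theta, n}}$ and $w - \Omega_\theta^n(x) \in \ell_{\phi_\theta}$, which upon taking $\rho = \|x\|_{\phi_{\theta, n}}$ unravels coordinatewise into precisely the two conditions imposed by Definition \ref{def-phi-p-n} for $(w, x) \in \ell_{\phi_{\theta, n+1}}$. Since different admissible choices of $B_\theta^n$ produce $\Omega$-maps whose difference maps boundedly into $X_\theta$, the resulting set $d^{n+1} X_\theta$ does not depend on the choice, which legitimizes this coordinatewise calculation and closes the induction.

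The main obstacle will be confirming the boundedness property (2) of $B_\theta^n$ for our normalized $h_x$: Lemma \ref{lem:gestimate} delivers coordinatewise estimates of the form $|h_x(j+it)(k)| \leq \alpha_n \phi_j^{-1}(\beta_n \phi_{\theta, n}(x(k)))$, so the $\Delta_2$ condition on $\phi_j$ will be needed to absorb the constant $\beta_n$ and to convert these pointwise bounds into an inequality of the form $\|h_x(j+it)\|_{\ell_{\phi_j}} \leq C \|x\|_{\phi_{\theta, n}}$. The degenerate case $\ell_{\phi_0} \in \{\ell_\infty, c_0\}$ will require a separate but entirely analogous treatment, following Case 2 of the proof of Lemma \ref{lem:h_is_in_F}.
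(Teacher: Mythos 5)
Your plan shares the key ingredients with the paper (induction, the coordinatewise function $h_x(z) = \sum_k g_{x(k)}(z) e_k$ from Lemma \ref{lem:h_is_in_F}, and the choice-independence of $d^n X_\theta$), but it organizes them differently and leaves a real gap in the ``unravels coordinatewise'' step.

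The discrepancy is this: Definition \ref{def-phi-p-n} is \emph{unnormalized}. Unwinding the membership condition for $\ell_{\phi_{\theta,n+1}}$ (using, as the paper's Observation notes, that each $\phi_{\theta,m}$ is $\Delta_2$, so scaling is irrelevant), one finds that $(w,x)\in\ell_{\phi_{\theta,n+1}}$ iff $x\in\ell_{\phi_{\theta,n}}$ and $w - (\hat g_{x(k)}[n;\theta])_k \in \ell_{\phi_\theta}$. Your normalized quasilinear map instead produces $\Omega_\theta^n(x)(k) = \rho\,\hat g_{x(k)/\rho}[n;\theta]$ with $\rho = \|x\|_{\phi_{\theta,n}}$, and since $g$ is \emph{not} homogeneous in its argument, $\rho\,\hat g_{x(k)/\rho}[n;\theta]$ and $\hat g_{x(k)}[n;\theta]$ are genuinely different. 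To close the loop you would still have to show that their difference lies in $\ell_{\phi_\theta}$. This can be done, but only by the same trick the paper actually uses: form $l = h_x - \rho\,h_{x/\rho} \in \mathcal{F}(\overline{X})$, observe $\hat l[j;\theta]=0$ for $j<n$, and conclude that $\hat l[n;\theta]\in X_\theta$. So the step you present as mere bookkeeping is really a second instance of the crucial ``vanishing Taylor coefficients'' argument, and omitting it leaves the proof incomplete.

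In contrast, the paper sidesteps normalization entirely. It works with the \emph{unnormalized} $\Psi(x_{n-2},\dots,x_0)=(\hat g_{(x_{n-2}(k),\dots,x_0(k))}[n-1;\theta])_k$, which matches Definition \ref{def-phi-p-n} on the nose, and then compares $\Psi$ to the \emph{given} $\Omega_\theta^{n-1}$ by applying the vanishing-Taylor-coefficient argument once to $l = h_x - B_\theta^{n-1}(x)$. This avoids ever having to verify that (a normalization of) $h_x$ satisfies the boundedness property (2) of $B_\theta^n$ — your ``main obstacle'' is simply not there in the paper's route. Your verification of property (2) via Lemma \ref{lem:gestimate} is in fact fine (only convexity of $\phi_j$, not $\Delta_2$, is needed to divide out $\beta_n$), but it is extra work that the paper's more economical comparison makes unnecessary.
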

\begin{proof}
Again, the proof is by induction. The base case is the classical equality $\ell_{\phi_{\theta}} = (\ell_{\phi_0}, \ell_{\phi_1})_{\theta}$. Suppose the result is true for $n - 1$. We begin by proving the inclusion $\ell_{\phi_{\theta, n}} \subset d^n X_{\theta}$. Let $(x_{n-1}, ..., x_0) \in \ell_{\phi_{\theta, n}}$. This implies that $(x_{n-2}, ..., x_0) \in \ell_{\phi_{\theta, n-1}} = d^{n-1} X_{\theta}$. Let
\[
\Psi(x_{n-2}, ..., x_0) = (\hat{g}_{(x_{n-2}(k), ..., x_0(k))}[n-1; \theta])_k \in \mathbb{C}^{\mathbb{N}}
\]
Then $x_{n-1} - \Psi(x_{n-2}, ..., x_0) \in \ell_{\phi_{\theta}}$. We must show that $x_{n-1} - \Omega_{\theta}^{n-1}(x_{n-2}, ..., x_0) \in \ell_{\phi_{\theta}}$, so, it is enough to show that $\Omega_{\theta}^{n-1}(x_{n-2}, ..., x_0) - \Psi(x_{n-2}, ..., x_0) \in \ell_{\phi_{\theta}}$. 
To see that, let
\[
h_{(x_{n-2}, ..., x_0)}(z) = \sum g_{(x_{n-2}(k), ..., x_0(k))}(z) e_k
\]
Then $h \in \mathcal{F}(\overline{X})$ by Lemma \ref{lem:h_is_in_F} and the function $l = h_{(x_{n-2}, ..., x_0)} - B_{\theta}(x_{n-2}, ..., x_0)$ is such that $\hat{l}[j; \theta] = 0$, $j = 0, ..., n-2$. It follows that $\hat{l}[n-1; \theta] \in \ell_{\phi_{\theta}}$ and therefore $\Omega_{\theta}^{n-1}(x_{n-2}, ..., x_0) - \Psi(x_{n-2}, ..., x_0) \in \ell_{\phi_{\theta}}$.

To prove the reverse inclusion, if $(x_{n-1}, ..., x_0) \in d^n X_{\theta}$ then $(x_{n-2}, ..., x_0) \in d^{n-1} X_{\theta} = \ell_{\phi_{\theta, n-1}}$ and $x_{n-1} - \Omega_{\theta}^{n-1}(x_{n-2}, ..., x_0) \in \ell_{\phi_{\theta}}$. So, by the previous calculation, we have $x_{n-1} - \Psi(x_{n-2}, ..., x_0) \in \ell_{\phi_{\theta}}$, and therefore $(x_{n-1}, ..., x_0) \in \ell_{\phi_{\theta, n}}$.
\end{proof}

\begin{proposition}
If $n \geq 1$ then the coordinate functionals on $\ell_{\phi_{\theta, n}}$ are continuous.
\end{proposition}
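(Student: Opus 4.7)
The plan is to argue by induction on $n$. For $n = 1$, $\ell_{\phi_{\theta, 1}} = \ell_{\phi_\theta}$ is a standard Orlicz sequence space, and the modular inequality $\phi_\theta(|x_0(k)|/\|x_0\|_{\phi_\theta}) \leq 1$ gives $|x_0(k)| \leq \phi_\theta^{-1}(1)\,\|x_0\|_{\phi_\theta}$ at once, so the base case is immediate.

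For the inductive step, fix $n \geq 2$ and assume the statement for $n-1$. Given $x = (x_{n-1}, \ldots, x_0) \in \ell_{\phi_{\theta, n}}$ with $\rho := \|x\|_{\phi_{\theta, n}} > 0$ (the zero case being trivial), pick $\rho' > \rho$ with $\sum_k \phi_{\theta, n}(x_{n-1}(k)/\rho', \ldots, x_0(k)/\rho') \leq 1$. Definition \ref{def-phi-p-n} decomposes this modular sum into two nonnegative pieces, and setting $y^{(k)} := (x_{n-2}(k)/\rho', \ldots, x_0(k)/\rho')$ I obtain
\[
\sum_k \phi_{\theta, n-1}(y^{(k)}) \leq 1 \qquad \text{and} \qquad \sum_k \phi_\theta\bigl(x_{n-1}(k)/\rho' - \hat{g}_{y^{(k)}}[n-1; \theta]\bigr) \leq 1.
\]
The first inequality says $\|(x_{n-2}, \ldots, x_0)\|_{\phi_{\theta, n-1}} \leq \rho'$, so by the inductive hypothesis the functionals $x \mapsto x_j(k)$ are continuous on $\ell_{\phi_{\theta, n}}$ for all $0 \leq j \leq n-2$.

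For the top coordinate I decompose
\[
\frac{x_{n-1}(k)}{\rho'} = \Bigl(\frac{x_{n-1}(k)}{\rho'} - \hat{g}_{y^{(k)}}[n-1; \theta]\Bigr) + \hat{g}_{y^{(k)}}[n-1; \theta].
\]
The second modular inequality forces each term of a nonnegative sum to be at most $1$, so the parenthesized piece has modulus bounded by $\phi_\theta^{-1}(1)$. For the Taylor coefficient, $g_{y^{(k)}}$ is analytic on $\mathbb{S}^{\mathrm{o}}$ and continuous and bounded on $\mathbb{S}$; applying Cauchy's estimates on disks centered at $\theta$ of radius $r < d := d(\theta, \partial \mathbb{S})$ and letting $r \uparrow d$ yields
\[
\bigl|\hat{g}_{y^{(k)}}[n-1; \theta]\bigr| \leq d^{-(n-1)} \sup_{z \in \mathbb{S}} |g_{y^{(k)}}(z)|.
\]
By Phragm\'en--Lindel\"of the supremum is attained on $\partial \mathbb{S}$, where Lemma \ref{lem:gestimate} combined with the pointwise bound $\phi_{\theta, n-1}(y^{(k)}) \leq 1$ (every term of the first modular sum is at most $1$) produces a bound on $|\hat{g}_{y^{(k)}}[n-1; \theta]|$ uniform in $k$. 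Putting the two pieces together and letting $\rho' \downarrow \rho$ gives $|x_{n-1}(k)| \leq M \rho$ for a constant $M$ independent of $k$, which is exactly continuity of the top coordinate functional.

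The main obstacle I foresee is the isolation of $\hat{g}_{y^{(k)}}[n-1; \theta]$ inside $x_{n-1}(k)/\rho'$ and recognising that Lemma \ref{lem:gestimate}, via a Cauchy-type estimate on the strip, is precisely what controls this Taylor coefficient uniformly in $k$; once that bridge is in place, the rest is a matter of unfolding Definition \ref{def-phi-p-n} and chasing constants.
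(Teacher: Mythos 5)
Your proof is correct, but it takes a genuinely different and considerably more labor\mbox{-}intensive route than the paper's. The paper's argument is direct and non\mbox{-}inductive: observe that $\phi_{\theta,n}$ is continuous, and combine this with quasi\mbox{-}convexity and the property $\lim_{t\to\infty}\phi_{\theta,n}(tu)=\infty$ for $u\neq 0$ to conclude that the sublevel set $\phi_{\theta,n}^{-1}[0,1]$ is bounded in $\mathbb{C}^n$; since $\|x\|_{\phi_{\theta,n}}\leq 1$ forces $\phi_{\theta,n}(x_{n-1}(k),\ldots,x_0(k))\leq 1$ for every $k$, all $n$ coordinate functionals at every index $k$ are simultaneously bounded. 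Your inductive argument instead unfolds Definition \ref{def-phi-p-n}, peels off the top coordinate, and controls the Taylor coefficient $\hat{g}_{y^{(k)}}[n-1;\theta]$ via Cauchy estimates on the strip together with Lemma \ref{lem:gestimate} and the pointwise modular bound. The trade\mbox{-}off: your route avoids having to verify that $\phi_{\theta,n}$ is continuous and that its unit sublevel set is bounded (which the paper leaves as an exercise), but at the cost of re\mbox{-}running the Phragm\'en--Lindel\"of/Cauchy machinery that already powers the proof of Theorem \ref{thm:quasiconvexfunctions}. One small point you should make explicit: passing from the norm bound $\|\cdot\|_{\phi_{\theta,n}} = \rho$ to a $\rho' > \rho$ with modular sum $\leq 1$ works by choosing $\rho'$ from the defining set of the infimum, not for arbitrary $\rho' > \rho$, since quasi\mbox{-}convexity alone does not give monotonicity of $\rho \mapsto \sum_k\phi_{\theta,n}(x(k)/\rho)$ without a constant; your phrasing ``pick $\rho' > \rho$'' is fine as long as it is read this way.
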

\begin{proof}
One may check that $\phi_{\theta, n}$ is continuous, so if $\|(x_{n-1}, ..., x_0)\|_{\ell_{\phi_{\theta, n}}} \leq 1$ then
\[
\sum \phi_{\theta, n}(x_{n-1}(k), ..., x_0(k)) \leq 1
\]
In particular, $\phi_{\theta, n}(x_{n-1}(k), ..., x_0(k)) \leq 1$ for every $k$. By the continuity of $\phi_{\theta, n}$ the set $\phi_{\theta, n}^{-1}[0, 1]$ is bounded, which implies that the coordinate functionals are bounded.
\end{proof}

\begin{proposition}
If $n \geq 1$ then the coordinate functionals on $d^n X_{\theta}$ are continuous.
\end{proposition}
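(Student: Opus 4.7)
The plan is to proceed by induction on $n$. For the base case $n = 1$, we have $d^1 X_{\theta} = X_{\theta} = \ell_{\phi_{\theta}}$ as sets with equivalent quasinorms (the classical interpolation identity), and on this Orlicz space the coordinate functionals are continuous by the previous proposition (or directly, as for any Orlicz sequence space).

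For the inductive step, fix $k \in \mathbb{N}$ and $0 \leq j \leq n-1$; we must show that the functional $(x_{n-1}, \ldots, x_0) \mapsto x_j(k)$ is continuous on $d^n X_{\theta}$. If $j \leq n-2$, this functional factors through the quotient map $(x_{n-1}, \ldots, x_0) \mapsto (x_{n-2}, \ldots, x_0)$ from $d^n X_{\theta}$ onto $d^{n-1} X_{\theta}$, which has norm at most $1$ by the definition of the quasinorm, and then applies the $k$-th coordinate functional on $d^{n-1} X_{\theta}$, which is continuous by the inductive hypothesis.

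The remaining case is $j = n-1$. The key observation is the decomposition
\[
x_{n-1}(k) \;=\; \bigl[x_{n-1} - \Omega_{\theta}^{n-1}(x_{n-2}, \ldots, x_0)\bigr](k) \;+\; \Omega_{\theta}^{n-1}(x_{n-2}, \ldots, x_0)(k).
\]
The first term is controlled by the continuity of $e_k^*$ on $X_{\theta} = \ell_{\phi_{\theta}}$, since $x_{n-1} - \Omega_{\theta}^{n-1}(x_{n-2}, \ldots, x_0) \in X_{\theta}$ with norm dominated by the quasinorm $\|(x_{n-1}, \ldots, x_0)\|_{d^n X_{\theta}}$. For the second term, I plan to use Cauchy's integral formula: by definition $\Omega_{\theta}^{n-1}(x_{n-2}, \ldots, x_0) = \widehat{B_{\theta}^{n-1}(x_{n-2}, \ldots, x_0)}[n-1; \theta]$, and $B_{\theta}^{n-1}(x_{n-2}, \ldots, x_0) \in \mathcal{F}(\overline{X})$ has norm bounded by $C_{n-1}\|(x_{n-2}, \ldots, x_0)\|_{d^{n-1} X_{\theta}}$. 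Since $e_k^*$ is continuous on $\Sigma(\overline{X})$ (both $\ell_{\phi_0}$ and $\ell_{\phi_1}$ embed continuously into $\ell_{\infty}$ via their coordinate functionals, so the coordinate functional on the sum space is bounded; in the degenerate case $\Sigma(\overline{X}) = \ell_{\infty}$ this is trivial), the scalar function $z \mapsto e_k^*(B_{\theta}^{n-1}(x_{n-2}, \ldots, x_0)(z))$ is bounded on $\mathbb{S}$ and analytic on $\mathbb{S}^{\mathrm{o}}$, with sup-norm bounded by $\|e_k^*\|_{\Sigma(\overline{X})^*} \cdot \|B_{\theta}^{n-1}(x_{n-2}, \ldots, x_0)\|_{\mathcal{F}(\overline{X})}$. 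Pulling back by the conformal map $\varphi$ to the unit disk and applying the Cauchy estimate for the $(n-1)$-th Taylor coefficient at $\theta$ then yields a bound on $|\Omega_{\theta}^{n-1}(x_{n-2}, \ldots, x_0)(k)|$ of the form $M_{k, n} \|(x_{n-2}, \ldots, x_0)\|_{d^{n-1} X_{\theta}}$, which in turn is dominated by $M_{k, n} \|(x_{n-1}, \ldots, x_0)\|_{d^n X_{\theta}}$.

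The main (mild) obstacle is the Cauchy estimate step: one must verify that $e_k^*$ is continuous on $\Sigma(\overline{X})$ in both the nondegenerate and degenerate cases, and then transfer the Cauchy formula from the disk to the strip via $\varphi$. No deep difficulty arises; the constants depend on $k$, $n$, and $\varphi$, but that is perfectly acceptable for establishing continuity of individual coordinate functionals.
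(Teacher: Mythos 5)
Your proof is correct and follows essentially the same route as the paper's: induction, the decomposition $x_{n-1}(k) = [x_{n-1} - \Omega_{\theta}^{n-1}(x_{n-2},\ldots,x_0)](k) + \Omega_{\theta}^{n-1}(x_{n-2},\ldots,x_0)(k)$, boundedness of $B_{\theta}^{n-1}$, a Cauchy estimate for the $(n-1)$-st Taylor coefficient (the paper phrases this as continuity of $\delta_{\theta,n-1}$), and continuity of $e_k^*$ on $\Sigma(\overline{X})$. One small simplification over your sketch: there is no need to transfer through the conformal map $\varphi$ — apply Cauchy's formula directly on a circle $|z-\theta| = r$ with $r < d(\theta, \partial\mathbb{S})$, which lies inside the strip.
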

\begin{proof}
The result is true for $n = 1$, since $d^1 X_{\theta}$ is an Orlicz space. Suppose the result is true for $n-1$ and that $\|(x_{n-1}, ..., x_0)\|_{d^n X_{\theta}} \leq 1$. It follows that $\|(x_{n-2}, ..., x_0)\|_{d^{n-1} X_{\theta}} \leq 1$, and by induction hypothesis there is $M > 0$ such that $\left|x_j(k)\right| \leq M$, $k \in \mathbb{N}$, $j = 0, ..., n-2$. It remains to prove that $x \mapsto x_{n-1}(k)$ is bounded.

Recall that $\|B_{\theta}^{n}(x_{n-1}, ..., x_0)\| \leq C_n$. Now, for each $n \in \mathbb{N}$ the map $\delta_{\theta, n} : \mathcal{F}(\overline{X}) \rightarrow \Sigma(\overline{X})$ given by $\delta_{\theta, n}(f) = f^{(n)}(\theta)$ is continuous, so there is $M > 0$ independent of $x$ such that
\[
\|\Omega_{\theta}^n(x_{n-1}, ..., x_0)\|_{\Sigma(\overline{X})} \leq M
\]
It is clear that the coordinate functionals on $\Sigma(\overline{X})$ are continuous too, so there is $N > 0$ independent of $x$ (and of $k$) such that
\[
\left|\Omega_{\theta}^n(x_{n-1}, ..., x_0)(k)\right| \leq N
\]
Now the result follows from $\left|x_{n-1}(k)\right| \leq \left|x_{n-1}(k) - \Omega_{\theta}^n(x_{n-1}, ..., x_0)(k)\right| + \left|\Omega_{\theta}^n(x_{n-1}, ..., x_0)(k)\right|$.
\end{proof}

All this implies
\begin{theorem}
For each $n \geq 1$ the identity $d^n X_{\theta} = \ell_{\phi_{\theta, n}}$ is an isomorphism.
\end{theorem}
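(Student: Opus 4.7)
The plan is to view this theorem as the clean combination of everything built in the section: we have a sequence space carrying two complete quasinorms (the one inherited from the Rochberg construction and the Fenchel-Orlicz one), and we have just verified the hypothesis needed to invoke the cited Proposition 3.2 of \cite{ACK-Fenchel}. So the proof should be short.

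First I would observe that by the previous proposition $d^n X_\theta$ and $\ell_{\phi_{\theta,n}}$ coincide as sets of scalar sequences. Second, both carry complete quasinorms: $\ell_{\phi_{\theta,n}}$ is a quasi-Banach space because $\phi_{\theta,n}$ is a quasi-Young function by Theorem \ref{thm:quasiconvexfunctions}, and $d^n X_\theta$ is a quasi-Banach space by construction via the quasilinear map $\Omega_\theta^{n-1}$. Third, the two propositions immediately preceding the theorem show that the coordinate functionals are continuous for both quasinorms.

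With these ingredients in hand, I would simply apply the ACK proposition \cite[Proposition 3.2]{ACK-Fenchel} quoted above to the sequence space $d^n X_\theta = \ell_{\phi_{\theta,n}}$ equipped with its two complete quasinorms $\|\cdot\|_{d^n X_\theta}$ and $\|\cdot\|_{\ell_{\phi_{\theta,n}}}$, to conclude that the two quasinorms are equivalent. This equivalence is exactly the statement that the set-theoretic identity is a topological isomorphism.

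There is no real obstacle here — the theorem is essentially a corollary once the three preceding propositions and Theorem \ref{thm:quasiconvexfunctions} are in place. The substantive work already happened in Section \ref{sec:quasiYoung} (showing $\phi_{\theta,n}$ is quasi-Young, so that $\ell_{\phi_{\theta,n}}$ is genuinely a quasi-Banach space whose quasinorm is equivalent to a norm) and in the three preceding propositions of Section \ref{sec:derivedareFenchel} (set equality and continuity of coordinate functionals on each side). The present theorem is the final assembly step, and I would present it in two or three lines.
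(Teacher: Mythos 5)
Your proposal is correct and matches the paper's approach exactly: the paper states the three preceding propositions (set equality, continuity of coordinate functionals on each side) and then deduces the theorem by the quoted Proposition 3.2 of \cite{ACK-Fenchel}, just as you do.
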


\section{The $C[0, 1]$-extension property}\label{sec:extension}

If $X$ is a real Banach space and $Y$ is a subspace of $X$, we say that the pair $(Y, X)$ has the $C[0, 1]$-extension property if every operator $T : Y \rightarrow C[0, 1]$ admits an extension to $X$. If $X$ is a complex Banach space we will accordingly deal with the $C([0, 1], \mathbb{C})$-extension property, which is defined analogously.

A combination of \cite[Theorem 4.1]{ACK-Fenchel} and the results of \cite{KaltonExtension} show that if $\phi$ is a real Young function in the class $\Delta_2$ then $(Y, \ell_{\phi})$ has the $C[0, 1]$-extension property for every subspace $Y$ of $\ell_{\phi}$.

The goal of this section is to prove the following result:

\begin{theorem}\label{thm:ext_property}
For every $n \leq m$ the pair $(\ell_{\phi_{\theta, n}}, \ell_{\phi_{\theta, m}})$ has the $C([0,1], \mathbb{C})$-extension property.
\end{theorem}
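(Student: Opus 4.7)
The plan is to reduce to the real $C[0,1]$-extension property quoted just before the statement, via realification of the Fenchel--Orlicz structure, and then to upgrade to complex scalars by the standard complex-linearization trick.

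First I would identify $\mathbb{C}^{m}$ with $\mathbb{R}^{2m}$ and check that $\phi_{\theta,m}$, viewed as a function $\phi_{\theta,m}^{\mathbb{R}}:\mathbb{R}^{2m}\to[0,\infty)$, is a real quasi-Young function in the class $\Delta_{2}$. Vanishing at $0$, coercivity, quasi-convexity and the $\Delta_{2}$ estimate (this last one by the observation following Theorem~\ref{thm:quasiconvexfunctions}) all pass unchanged from the complex to the real setting, and the central symmetry $\phi_{\theta,m}^{\mathbb{R}}(-x)=\phi_{\theta,m}^{\mathbb{R}}(x)$ is the special case $s=\pi$ of the rotation invariance of $\phi_{\theta,m}$. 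A real analogue of Proposition 2.1 of \cite{ACK-Fenchel} then shows that $\ell_{\phi_{\theta,m}}$, regarded as a real Banach space, is isomorphic to a real Fenchel--Orlicz space whose defining function is a bona fide real Young function in $\Delta_{2}$. At that point the combination of \cite[Theorem 4.1]{ACK-Fenchel} and \cite{KaltonExtension} applies, so for every real subspace $Y\subset\ell_{\phi_{\theta,m}}$ every real-linear operator $Y\to C[0,1]$ extends to a real-linear operator on $\ell_{\phi_{\theta,m}}$. In particular this holds for $Y=\ell_{\phi_{\theta,n}}$, which embeds in $\ell_{\phi_{\theta,m}}$ via the iterated short exact sequences $0\to d^{n}X_{\theta}\to d^{m}X_{\theta}\to d^{m-n}X_{\theta}\to 0$ together with the identifications of Section~\ref{sec:derivedareFenchel}.

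Finally, I would upgrade to complex scalars. Given a complex-linear $T:\ell_{\phi_{\theta,n}}\to C([0,1],\mathbb{C})$, view it as a real-linear operator into $C([0,1],\mathbb{C})_{\mathbb{R}}\cong C([0,1],\mathbb{R})\oplus_{\infty}C([0,1],\mathbb{R})\cong C([0,1]\sqcup[0,1],\mathbb{R})$, which is isomorphic to $C[0,1]$ by Milutin's theorem. The previous step yields a bounded real-linear extension $\tilde{T}_{\mathbb{R}}:\ell_{\phi_{\theta,m}}\to C([0,1],\mathbb{C})$, and then
\[
\tilde{T}(x):=\tfrac{1}{2}\bigl(\tilde{T}_{\mathbb{R}}(x)-i\,\tilde{T}_{\mathbb{R}}(ix)\bigr)
\]
defines a bounded complex-linear extension of $T$: complex linearity is a direct verification, and the extension property on $\ell_{\phi_{\theta,n}}$ uses precisely that $T$ was already complex-linear there and hence satisfied $T(iy)=iT(y)$. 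The main obstacle I anticipate is the first paragraph, namely carefully verifying that the realified function fits into the hypotheses of the real ACK--Kalton theorem; once that bridge is built, the remaining steps are routine.
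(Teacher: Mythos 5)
Your proof is correct, but it takes a genuinely shorter route than the paper's. The paper devotes most of Section~\ref{sec:extension} to proving that $\ell_{\phi_{\theta,n}}$ is, up to $\mathbb{R}$-isomorphism, the complexification $\ell_{\phi_{\theta,n}}(\mathbb{R})\oplus_{\mathbb{C}}\ell_{\phi_{\theta,n}}(\mathbb{R})$ of the real Fenchel--Orlicz space built from $\phi_{\theta,n}|_{\mathbb{R}^n}$; this requires first arranging (via Lemma~\ref{lem:real_conformal}) that the conformal map and hence all $k_j$ are real, so that $\phi_{\theta,n}|_{\mathbb{R}^n}$ interacts well with the real structure, and then proving the quantitative estimate $\phi_{\theta,n}(x)\le a_n\phi_{\theta,n}(x+iy)$ for $x,y\in\mathbb{R}^n$, from which the identification of the complexification follows. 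Your proposal sidesteps all of that structural analysis: by regarding $\mathbb{C}^m$ as $\mathbb{R}^{2m}$ you make $\ell_{\phi_{\theta,m}}$ a real Fenchel--Orlicz space \emph{for free}, at the price of doubling the dimension, and then you complexify abstractly with $\tilde T(x)=\tfrac12(\tilde T_{\mathbb R}(x)-i\tilde T_{\mathbb R}(ix))$ (and, harmlessly, a Milutin identification for the target). This is a legitimate simplification for proving Theorem~\ref{thm:ext_property} as stated; what it does not yield is the structural by-product of the paper's argument, namely that $\ell_{\phi_{\theta,n}}$ is the complexification of a real Fenchel--Orlicz space in half as many coordinates, a fact of independent interest. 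Both routes rely, as you correctly flag, on the same unavoidable bridge: replacing the real quasi-Young function (either $\phi_{\theta,m}|_{\mathbb R^m}$ or your $\phi_{\theta,m}^{\mathbb R}$ on $\mathbb R^{2m}$) by an equivalent genuine Young function in $\Delta_2$ via the real analogue of \cite[Proposition~2.1]{ACK-Fenchel} so that \cite[Theorem~4.1]{ACK-Fenchel} plus \cite{KaltonExtension} applies; the paper is no more explicit about this step than you are.
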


Recall that if $X$ is a real Banach space then on the complexification $X \oplus_{\mathbb{C}} X$ we put the norm
\[
\|(x, y)\| = \sup_{\theta \in [0, 2\pi]} \|\cos(\theta) x + \sin(\theta) y\|
\]

The following result is clear:

\begin{proposition}
Let $(Y, X)$ be a pair of real Banach spaces with the $C[0, 1]$-extension property. Then $(Y \oplus_{\mathbb{C}} Y, X \oplus_{\mathbb{C}} X)$ has the $C([0, 1], \mathbb{C})$-extension property.
\end{proposition}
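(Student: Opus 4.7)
My plan is to exploit the fact that $C([0,1], \mathbb{C})$ is itself isometrically the Taylor complexification of the real Banach space $C([0,1], \mathbb{R})$, and then to reduce the complex extension problem to two applications of the given real $C[0,1]$-extension property.

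First, take a bounded complex-linear operator $T : Y \oplus_{\mathbb{C}} Y \to C([0,1], \mathbb{C})$. Under the identification $y \mapsto (y, 0)$ (which is isometric from $Y$ into $Y \oplus_{\mathbb{C}} Y$), form the real-linear restriction $S = T|_Y : Y \to C([0,1], \mathbb{C})$. Complex-linearity of $T$ forces $T(y_1 + i y_2) = S(y_1) + i S(y_2)$. Split $S = S_1 + i S_2$ into real and imaginary parts $S_j : Y \to C([0,1], \mathbb{R})$; each $S_j$ is a bounded real-linear operator, so the hypothesis applied twice produces real-linear bounded extensions $\tilde S_j : X \to C([0,1], \mathbb{R})$. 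Setting $\tilde S = \tilde S_1 + i \tilde S_2 : X \to C([0,1], \mathbb{C})$, I define
\[
\tilde T : X \oplus_{\mathbb{C}} X \to C([0,1], \mathbb{C}), \qquad \tilde T(x_1 + i x_2) = \tilde S(x_1) + i \tilde S(x_2).
\]
This map is complex-linear by construction and extends $T$: for $y_1 + i y_2 \in Y \oplus_{\mathbb{C}} Y$ we have $\tilde T(y_1 + i y_2) = S(y_1) + i S(y_2) = T(y_1 + i y_2)$.

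The real content is the continuity of $\tilde T$ with respect to the Taylor norm on $X \oplus_{\mathbb{C}} X$. The crucial identity
\[
\|f + i g\|_{C([0,1], \mathbb{C})} = \sup_{t \in [0,1]} \sqrt{f(t)^2 + g(t)^2} = \sup_{\phi \in [0, 2\pi]} \|\cos\phi \cdot f + \sin\phi \cdot g\|_{\infty}
\]
for $f, g \in C([0,1], \mathbb{R})$ identifies the target isometrically with the Taylor complexification $C([0,1], \mathbb{R}) \oplus_{\mathbb{C}} C([0,1], \mathbb{R})$. Expanding $\tilde T(x_1 + i x_2) = (\tilde S_1 x_1 - \tilde S_2 x_2) + i (\tilde S_2 x_1 + \tilde S_1 x_2)$ and applying this identity yields
\[
\|\tilde T(x_1 + i x_2)\|_{C([0,1], \mathbb{C})} = \sup_{\phi} \| R_\phi x_1 + R_{\phi - \pi/2} x_2 \|_{\infty},
\]
where $R_\phi = \cos\phi \cdot \tilde S_1 + \sin\phi \cdot \tilde S_2 : X \to C([0,1], \mathbb{R})$ is real-linear with $\|R_\phi\| \leq \|\tilde S_1\| + \|\tilde S_2\|$ uniformly in $\phi$. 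The triangle inequality together with $\max(\|x_1\|_X, \|x_2\|_X) \leq \|(x_1, x_2)\|_{X \oplus_{\mathbb{C}} X}$ (specialize $\theta \in \{0, \pi/2\}$ in the defining supremum) then gives $\|\tilde T\| \leq 2(\|\tilde S_1\| + \|\tilde S_2\|) < \infty$.

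The hardest step is the boundedness, and it resolves by recognizing the double Taylor-complexification structure: both the source norm on $X \oplus_{\mathbb{C}} X$ and the target modulus on $C([0,1], \mathbb{C})$ are suprema over a rotation parameter, and they combine cleanly via the rotated real operators $R_\phi$. Once this identification is in hand, the rest is bookkeeping between the complex-linear structure of $T$ and the four real components $\tilde S_j(x_k)$.
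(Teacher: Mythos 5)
Your proof is correct; the paper states this proposition without proof (``The following result is clear''), and your argument --- splitting $T|_Y$ into real and imaginary parts, extending each by the real hypothesis, recombining complex-linearly, and checking boundedness via the identification of $C([0,1],\mathbb{C})$ with the Taylor complexification of $C([0,1],\mathbb{R})$ --- is exactly the routine verification being left to the reader. All the steps, including the norm estimate through the rotated operators $R_\phi$, check out.
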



For a complex Young function $\phi$ we will let $\ell_{\phi}(\mathbb{R})$ be the real sequence space $\ell_{\phi|_{\mathbb{R}}}$.

\begin{lemma}\label{lem:real_conformal}
For every $\theta \in (0, 1)$ there is a conformal map $\varphi : \mathbb{S} \rightarrow \mathbb{D}$ such that $\varphi(t) \in \mathbb{R}$ for every $t \in (0, 1)$ and $\varphi(\theta) = 0$. In particular, $\varphi^{(n)}(t) \in \mathbb{R}$ for every $n \geq 1$ and $t \in (0, 1)$, and therefore we may take $k_n$ real.
\end{lemma}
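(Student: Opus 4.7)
The plan is to construct $\varphi$ as an explicit composition of standard conformal maps, designed from the outset to carry the real segment $(0,1)$ into $\mathbb{R}$. I would begin with $\psi(z) = \tan(\pi(z - 1/2)/2)$: this is a conformal bijection of $\mathbb{S}^{\mathrm{o}}$ onto $\mathbb{D}$, as one sees by factoring it through $z \mapsto e^{i\pi(z-1/2)}$, which takes $\mathbb{S}^{\mathrm{o}}$ onto the right half-plane, followed by the M\"obius map $u \mapsto -i(u-1)/(u+1)$, which sends the right half-plane onto $\mathbb{D}$. Because $\tan$ is real-valued on the real line, $\psi$ sends $(0,1)$ into $(-1,1) \subset \mathbb{R}$, with $\psi(1/2) = 0$.

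For a general $\theta \in (0,1)$, I set $\beta = \psi(\theta) \in (-1,1)$ and compose $\psi$ with the disk automorphism $M(w) = (w - \beta)/(1 - \beta w)$. Since $\beta$ is real, $M$ has real coefficients and therefore maps $(-1,1)$ into $(-1,1)$; moreover $M(\beta) = 0$. The composition $\varphi := M \circ \psi$ is then the required conformal map $\mathbb{S} \to \mathbb{D}$ with $\varphi(\theta) = 0$ and $\varphi((0,1)) \subset \mathbb{R}$.

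For the \emph{in particular} clause, since $\varphi$ is analytic on $\mathbb{S}^{\mathrm{o}}$ and real on the open interval $(0,1)$, its Taylor expansion at any $t \in (0,1)$ has real coefficients (this is essentially Schwarz reflection), so $\varphi^{(n)}(t) \in \mathbb{R}$ for every $n \geq 1$. Inspecting the recursive formula defining $B_{\theta}^{n+1}$, one sees that $k_n$ is determined by a polynomial expression in the values $\varphi^{(j)}(\theta)$, and hence may be taken real. I do not expect a serious obstacle: the only substantive step is spotting the two-stage construction, where one must ensure that the automorphism $M$ used to send $\psi(\theta)$ to $0$ does not destroy the reality of $\varphi$ on $(0,1)$, which is precisely why one uses a real M\"obius automorphism with $\beta$ on the real diameter.
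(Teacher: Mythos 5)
Your proof is correct but follows a genuinely different construction from the paper's. The paper works directly with the $\theta$-centered conformal map $\chi_{\theta}(z) = (e^{i\pi z} - e^{i\pi\theta})/(e^{i\pi z} - e^{-i\pi\theta})$, observes by a short computation that for $t \in (0,1)$ the value $\chi_{\theta}(t)$ always lies on the line through the origin of slope $\tan(\pi\theta)$, and then rotates by the unimodular constant $e^{-i\pi\theta}$ to push this line onto $\mathbb{R}$. You instead begin from the symmetric map $\psi(z) = \tan(\pi(z-1/2)/2)$, whose reality on $(0,1)$ is immediate because $\tan$ is real on reals, and then post-compose with the real-coefficient disk automorphism $M(w) = (w - \psi(\theta))/(1 - \psi(\theta)w)$ to relocate the zero to $\theta$. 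Both routes are elementary, but yours makes reality on $(0,1)$ transparent at every stage and avoids the small verification the paper leaves to the reader; the paper's is slightly more compact in that it produces the map in one shot. Your treatment of the \emph{in particular} clause — that a function analytic near $(0,1)$ and real-valued there has real derivatives of all orders at real points, whence $k_n$ (being a polynomial expression in $\varphi^{(j)}(\theta)$) may be taken real — is sound and matches the paper's intent, which simply asserts this without elaboration.
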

\begin{proof}
Consider the conformal map $\chi : \mathbb{S} \rightarrow \mathbb{D}$ given by
\[
\chi_{\theta}(z) = \frac{e^{i\pi z} - e^{i \pi \theta}}{e^{i\pi z} - e^{- i \pi \theta}}
\]
We have $\chi_{\theta}(\theta) = 0$. Also, one may check that if we write $\chi_{\theta}(z) = (f_1(z), f_2(z)) \in \mathbb{R}^2$ then the ratio $\frac{f_2(t)}{f_1(t)}$ is constant for $t \in (0, 1)$, which means that we may obtain $\varphi$ as in the enunciate by multiplying $\chi_{\theta}$ by a modulus one constant. The remark about $k_n$ follows from noticing that it is defined in terms of the derivative of $\varphi$ at $\theta$.
\end{proof}

\begin{lemma}
For every $n \geq 1$ there is a constant $a_n$ such that $\phi_{\theta, n}(x) \leq a_n \phi_{\theta, n}(x + iy)$ for every $x, y \in \mathbb{R}^n$.
\end{lemma}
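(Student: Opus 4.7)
The plan is to argue by induction on $n$, with the base case $n=1$ being immediate since $\phi_\theta$ depends only on modulus and $|x|\leq|x+iy|$ for $x,y\in\mathbb{R}$, so $a_1=1$ works. For the inductive step, I would use the recursive definition
\[
\phi_{\theta,n}(x+iy)=\phi_{\theta,n-1}(x'+iy')+\phi_\theta((x_{n-1}+iy_{n-1})-\hat{g}_{x'+iy'}[n-1;\theta]),
\]
where $x', y'$ denote the first $n-1$ coordinates. The first summand is handled by the induction hypothesis. The difficulty is the second summand: I need to bound $\phi_\theta(x_{n-1}-\hat{g}_{x'}[n-1;\theta])$ in terms of the two summands of $\phi_{\theta,n}(x+iy)$.

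The key auxiliary observation I would establish first is a conjugation symmetry: under the real-valued conformal map and real $k_n$ from Lemma \ref{lem:real_conformal}, an easy induction on the definition of $g$ yields $\overline{g_w(\overline{z})}=g_{\overline{w}}(z)$ for all $w\in\mathbb{C}^m$. This has two consequences: $\overline{\hat{g}_w[j;\theta]}=\hat{g}_{\overline{w}}[j;\theta]$, and $\phi_{\theta,m}(\overline{w})=\phi_{\theta,m}(w)$. In particular, $\hat{g}_{x'}[j;\theta]$ is real whenever $x'\in\mathbb{R}^{n-1}$, and $2\,\mathrm{Re}(\hat{g}_{x'+iy'}[n-1;\theta])=\hat{g}_{x'+iy'}[n-1;\theta]+\hat{g}_{x'-iy'}[n-1;\theta]$. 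This symmetry suggests studying the auxiliary function
\[
h_1(z)=\tfrac{1}{2}\bigl(g_{x'+iy'}(z)+g_{x'-iy'}(z)\bigr)-g_{x'}(z),
\]
whose first $n-1$ Taylor coefficients at $\theta$ vanish (since $\hat{g}_w[j;\theta]=w_j$ for $j<n-1$), while $\hat{h_1}[n-1;\theta]=\mathrm{Re}(\hat{g}_{x'+iy'}[n-1;\theta])-\hat{g}_{x'}[n-1;\theta]$.

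I would then mimic the three-lines argument of Theorem \ref{thm:quasiconvexfunctions}: $h_2(z)=h_1(z)/(z-\theta)^{n-1}$ is bounded on $\mathbb{S}$ and analytic on $\mathbb{S}^{\mathrm{o}}$ with $h_2(\theta)=\hat{h_1}[n-1;\theta]$. Using Lemma \ref{lem:gestimate} on each of the three $g$-terms, together with $\phi_{\theta,n-1}(x'-iy')=\phi_{\theta,n-1}(x'+iy')$ from the symmetry and $\phi_{\theta,n-1}(x')\leq a_{n-1}\phi_{\theta,n-1}(x'+iy')$ from the inductive hypothesis, and collapsing the resulting sums of $\phi_j^{-1}$ terms via Lemma \ref{lem:phi-constants}, I obtain boundary estimates of the form $|h_2(it)|\leq K \phi_0^{-1}(K'\phi_{\theta,n-1}(x'+iy'))$ and similarly on $1+it$ with $\phi_1^{-1}$. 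The three-lines lemma then gives $|h_2(\theta)|\leq K''\phi_\theta^{-1}(K'\phi_{\theta,n-1}(x'+iy'))$, and applying $\phi_\theta$ with Lemma \ref{lem:phi-constants}(2) yields $\phi_\theta(\hat{h_1}[n-1;\theta])\leq K'''\phi_{\theta,n-1}(x'+iy')$.

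To close the induction, I would write $x_{n-1}-\hat{g}_{x'}[n-1;\theta]$ as the sum of $\mathrm{Re}((x_{n-1}+iy_{n-1})-\hat{g}_{x'+iy'}[n-1;\theta])$ and $\hat{h_1}[n-1;\theta]$. Applying $\phi_\theta$ and using the $\Delta_2$ splitting from Lemma \ref{lem:phi-constants}(1), the first summand is controlled by $\phi_\theta((x_{n-1}+iy_{n-1})-\hat{g}_{x'+iy'}[n-1;\theta])$ via $\phi_\theta(\mathrm{Re}(\zeta))\leq\phi_\theta(\zeta)$, and the second by $K'''\phi_{\theta,n-1}(x'+iy')$. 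Combining with the inductive bound on $\phi_{\theta,n-1}(x')$ gives $\phi_{\theta,n}(x)\leq a_n\phi_{\theta,n}(x+iy)$ for a suitable $a_n$. The main obstacle is setting up the right auxiliary function: once one spots that the symmetrization trick replaces $\hat{g}_{x'+iy'}[n-1;\theta]$ by its real part and makes the lower-order Taylor coefficients at $\theta$ cancel, the rest is a direct adaptation of the three-lines argument already used in the paper.
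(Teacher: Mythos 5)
Your proof is correct and follows essentially the same strategy as the paper's: reduce to bounding $\phi_\theta(x_{n-1}-\hat{g}_{x'}[n-1;\theta])$, form an auxiliary analytic function whose lower-order Taylor coefficients at $\theta$ vanish, and run the three-lines argument of Theorem \ref{thm:quasiconvexfunctions} together with Lemma \ref{lem:gestimate}. The only (cosmetic) difference is the choice of auxiliary function: the paper takes $h_1 = g_{x'+iy'} - g_{x'} - ig_{y'}$ and then extracts the real part of $\hat{h_1}[n-1;\theta]$, whereas you symmetrize over conjugates and take $h_1 = \tfrac12(g_{x'+iy'}+g_{x'-iy'}) - g_{x'}$, so the relevant Taylor coefficient is already real; both are controlled on the boundary by $\phi_{\theta,n-1}(x'+iy')$, you via $\phi_{\theta,n-1}(x'-iy')=\phi_{\theta,n-1}(x'+iy')$ and the paper via a bound on $\phi_{\theta,n-1}(y')$. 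You also make explicit the conjugation symmetry $\overline{g_w(\bar z)}=g_{\bar w}(z)$ (hence $\hat{g}_{x'}[j;\theta]\in\mathbb{R}$ for real $x'$) that the paper uses implicitly through Lemma \ref{lem:real_conformal}; spelling this out is a genuine improvement in clarity, as it is exactly why the final decomposition and the inequality $\phi_\theta(\mathrm{Re}\,\zeta)\leq\phi_\theta(\zeta)$ can be applied.
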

\begin{proof}
For the base case we may take $a_1 = 1$. By induction, it is enough to show that there is a constant $c_n$ such that
\[
\phi_\theta(x_{n-1} - \hat{g}_{(x_{n-2}, ..., x_0)}[n-1; \theta]) \leq c_n \phi_{\theta, n}(x_{n-1} + i y_{n-1}, \cdots, x_0 + i y_0)
\]
for every $x, y \in \mathbb{R}^n$.

Let $c$ be the constant of Lemma \ref{lem:phi-constants} for $\phi_{\theta}$. We have that $\phi_\theta(x_{n-1} - \hat{g}_{(x_{n-2}, ..., x_0)}[n-1; \theta])$ is bounded by
\[
\phi_\theta(x_{n-1} - \hat{g}_{(x_{n-2}, ..., x_0)}[n-1; \theta] + i(y_{n-1} - Im \mbox{ } \hat{g}_{(x_{n-2} + i y_{n-2}, ..., x_0 + i y_0)}[n-1; \theta]))
\]
which in turn is smaller or equal to $c$ times
\begin{eqnarray*}
&& \phi_{\theta}(x_{n-1} + i y_{n-1} - \hat{g}_{(x_{n-2} + i y_{n-2}, ..., x_0 + i y_0)}[n-1; \theta]) \\
+ &&\phi_{\theta}(Re \mbox{ } \hat{g}_{(x_{n-2} + i y_{n-2}, ..., x_0 + i y_0)}[n-1; \theta] - \hat{g}_{(x_{n-2}, ..., x_0)}[n-1; \theta])
\end{eqnarray*}

Now it is enough to bound $\phi_{\theta}(Re \mbox{ } \hat{g}_{(x_{n-2} + i y_{n-2}, ..., x_0 + i y_0)}[n-1; \theta] - \hat{g}_{(x_{n-2}, ..., x_0)}[n-1; \theta])$ in terms of $\phi_{\theta, n-1}(x_{n-2} + i y_{n-2}, \cdots, x_0 + i y_0)$. So let $h_1 = g_{x_{n-2} + iy_{n-2}, \cdots, x_0 + i y_0} - g_{x_{n-2}, \cdots, x_0} - i g_{y_{n-2}, \cdots, y_0}$ and proceed as in the proof of Theorem \ref{thm:quasiconvexfunctions}.
\end{proof}

Now, Theorem \ref{thm:ext_property} is simply a consequence of the following:

\begin{corollary}
For every $n \geq 1$ there is a constant $a_n$ such that $\|x\|_{\ell_{\phi_{\theta ,n}}} \leq a_n \|x + iy\|_{\ell_{\phi_{\theta ,n}}}$ for every $x, y \in (\mathbb{R}^n)^{\mathbb{N}}$. In particular, $\ell_{\phi_{\theta, n}}$ is $\mathbb{R}$-isomorphic to the complexification $\ell_{\phi_{\theta, n}}(\mathbb{R}) \oplus_{\mathbb{C}} \ell_{\phi_{\theta, n}}(\mathbb{R})$.
\end{corollary}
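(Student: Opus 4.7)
The plan is to upgrade the pointwise bound $\phi_{\theta, n}(x) \leq a_n \phi_{\theta, n}(x+iy)$ from the previous lemma to the claimed norm inequality, and then deduce the complexification statement.

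Fix $x, y \in (\mathbb{R}^n)^{\mathbb{N}}$ and pick $\rho > \|x+iy\|_{\ell_{\phi_{\theta, n}}}$, so that $\sum_k \phi_{\theta, n}((x^k+iy^k)/\rho) \leq 1$. Applying the preceding lemma coordinatewise (with $x^k/\rho$ and $y^k/\rho$ in place of $x$ and $y$) and summing over $k$ gives $\sum_k \phi_{\theta, n}(x^k/\rho) \leq a_n$. This is not yet the required bound $\leq 1$, so I would rescale using quasi-convexity of $\phi_{\theta, n}$ (Theorem \ref{thm:quasiconvexfunctions}): setting one of the arguments to $0$ in the quasi-convexity inequality yields $\phi_{\theta, n}(t z) \leq C_n t \phi_{\theta, n}(z)$ for $t \in [0,1]$, and then taking $t = 1/(C_n a_n)$ (we may assume $C_n a_n \geq 1$) the bound $a_n$ on the sum becomes a bound of $1$ on the rescaled sum $\sum_k \phi_{\theta, n}(x^k/(C_n a_n \rho))$. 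Therefore $\|x\|_{\ell_{\phi_{\theta, n}}} \leq C_n a_n \rho$, and letting $\rho \downarrow \|x+iy\|_{\ell_{\phi_{\theta, n}}}$ produces the sought inequality with a new constant $a_n$.

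For the second assertion, consider the natural $\mathbb{R}$-linear bijection $T \colon \ell_{\phi_{\theta, n}}(\mathbb{R}) \oplus_{\mathbb{C}} \ell_{\phi_{\theta, n}}(\mathbb{R}) \to \ell_{\phi_{\theta, n}}$ given by $(x, y) \mapsto x + iy$. Rotation invariance of $\phi_{\theta, n}$ makes multiplication by $i$ an isometry, so $T$ is bounded by the triangle inequality for $\|\cdot\|_{\ell_{\phi_{\theta, n}}}$ (which is equivalent to a norm). For the inverse, write $\cos(s)\, x + \sin(s)\, y = \mathrm{Re}(e^{-is}(x+iy))$; applying the just-proved norm inequality to the real and imaginary parts of the rotated vector $e^{-is}(x+iy)$ bounds each slice $\|\cos(s)\, x + \sin(s)\, y\|_{\ell_{\phi_{\theta, n}}}$ uniformly by $a_n \|x+iy\|_{\ell_{\phi_{\theta, n}}}$, and taking the supremum over $s$ yields the required bound on $\|(x,y)\|_{\oplus_{\mathbb{C}}}$.

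The only substantive step is the rescaling argument in the first part. The pointwise lemma hands over the inequality at the level of the $\phi_{\theta, n}$-sums up to the multiplicative constant $a_n$; the role of quasi-convexity is precisely to convert this into a constant at the level of the norm. Everything else is routine bookkeeping with rotation invariance and the triangle inequality.
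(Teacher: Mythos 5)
Your proof is correct, and since the paper states this as a corollary with no written proof, your argument is exactly the routine bookkeeping the author intended to leave implicit. The rescaling step (using $\phi_{\theta,n}(tz)\le C_n t\,\phi_{\theta,n}(z)$ from quasi-convexity with $y=0$) correctly upgrades the pointwise constant $a_n$ on the modular sums to a constant $C_n a_n$ on the norms, and the identity $\cos(s)x+\sin(s)y=\mathrm{Re}(e^{-is}(x+iy))$ combined with rotation invariance of $\phi_{\theta,n}$ is the right way to control every slice in the complexification norm at once.
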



From our results, the Johnson-Szankowski twisted Hilbert spaces $Z(\mathcal{JS})$ of \cite{SomeMoreTwisted} are Fenchel-Orlicz spaces up to a renorming, and $(\ell_2, Z(\mathcal{JS}))$ has the $C([0, 1], \mathbb{C})$-extension property. The Johnson-Szankowski twisted Hilbert spaces are examples of HAPpy spaces which are not asymptotically Hilbertian.

\section{Final remark}\label{sec:finalremarks}

In \cite{ACK-Fenchel}, given an Orlicz sequence space $\ell_{\phi}$ with nontrivial type, the authors use Lipschitz functions to build extensions of $\ell_{\phi}$. If we take the identity as Lipschitz function and $\ell_{\phi}$ is $p$-convex and $q$-concave for nontrivial $p$ and $q$, one may check that the extension of $\ell_{\phi}$ obtained in \cite{ACK-Fenchel} corresponds to the one induced by the couple $(\ell_{\infty}, (\ell_{\phi})_{(p)})$ at $\frac{1}{p}$, where $X_{(p)}$ is the $p$-concavification of $X$. It follows that we automatically get higher order extensions of $\ell_{\phi}$ in that case. It would be interesting to obtain higher order extensions when using other functions.

\bibliographystyle{amsplain}
\bibliography{refs}

\end{document}